\documentclass[12pt]{amsart}

\usepackage{amsmath}
\usepackage{amssymb}
\usepackage{amsthm}
\usepackage{amscd}
\usepackage{xypic}
\usepackage{delarray}

\textwidth=14.00cm \textheight=22cm \topmargin=0.00cm
\headsep=1cm \numberwithin{equation}{section}
\hyphenation{semi-stable} \emergencystretch=10pt


\newtheorem{theorem}{Theorem}[section]
\newtheorem{lemma}[theorem]{Lemma}

\theoremstyle{definition}

\newtheorem{remark}[theorem]{Remark}

\newtheorem{convention and reminder}[theorem]{Convention and Reminder}
\newtheorem{convention and remark}[theorem]{Convention and Remark}
\newtheorem{definition and remark}[theorem]{Definition and Remark}

\newtheorem{reminders and definition}[theorem]{Reminders and Definition}

\newtheorem{notation and remarks}[theorem]{Notation and Remarks}
\newtheorem{notation and remark}[theorem]{Notation and Remark}

\newcommand\depth{\operatorname{depth}}
\newcommand\codim{\operatorname{codim}}

\newcommand\Ker{\operatorname{\Ker}}

\newcommand\Sec{\operatorname{Sec}}


\begin{document}

\title[Secant loci of rational normal scrolls]
      {On varieties of almost minimal degree I : Secant loci of rational normal scrolls}

\author{M. BRODMANN and E. PARK}

\date{Z\"urich, January 2009}

\subjclass[2]{Primary: 14N25, 14M20 }

\keywords{Variety of minimal degree, secant locus, Del Pezzo variety}

\begin{abstract}
To complete the classification theory and the structure theory of varieties
of almost minimal degree, that is of non-degenerate irreducible projective
varieties whose degree exceeds the codimension by precisely 2, a natural
approach is to investigate simple projections of varieties of minimal degree.
Let $\tilde X \subset {\mathbb P}^{r+1} _K$ be a variety of minimal degree and
of codimension at least 2, and consider $X_p = \pi_p (\tilde X) \subset
{\mathbb P}^r _K$ where $p \in {\mathbb P}^{r+1} _K \backslash \tilde X$.
By \cite{B-Sche}, it turns out that the cohomological and
local properties of $X_p$ are governed by the secant locus $\Sigma_p
(\tilde X)$ of $\tilde X$ with respect to $p$.

Along these lines, the present paper is devoted to give a geometric
description of the secant stratification of $\tilde X$, that is of the
decomposition of ${\mathbb P}^{r+1} _K$ via the types of secant loci.
We show that there are exactly six possibilities for the secant locus
$\Sigma_p (\tilde X)$, and we precisely describe each stratum of the
secant stratification of $\tilde X$, each of which turns out to be a quasi-projective
variety.

As an application, we obtain the classification of all non-normal Del Pezzo varieties by providing a complete list of pairs $(\tilde X , p)$
where $\tilde X \subset {\mathbb P}^{r+1} _K$ is a variety of minimal degree, $p$ is a closed point in $\mathbb P^{r+1} _K \setminus \tilde X$ and $X_p \subset {\mathbb P}^r _K$ is a Del Pezzo variety.
\end{abstract}

\maketitle
\thispagestyle{empty}

\section{Introduction}
\label{1. Introduction}

Throughout this paper, we work over an algebraically closed field $K$ of arbitrary characteristic. We denote by ${\mathbb P}^r _K$ the projective $r$-space over $K$.

Let $X \subset {\mathbb P}^r _K$ be a non-degenerate irreducible projective variety. It is well-known that $\mbox{deg}(X) \geq \mbox{codim}(X)+1$. In case equality holds, $X$ is called a variety of minimal degree. Varieties of minimal degree were completely classified more than hundred years ago by P. Del Pezzo and E. Bertini, and now they are very well understood from several points of view. A variety $X\subset {\mathbb P}^r _K$ is of minimal degree if and only if it is either ${\mathbb P}^r _K$ or a quadric hypersurface or (a cone over) the Veronese surface in ${\mathbb P}^5 _K$ or a rational normal scroll.

In the next case, that is if $\mbox{deg}(X) = \mbox{codim}(X)+2$, one calls $X$ a variety of almost minimal degree. These varieties can be divided into two classes:
\begin{enumerate}
\item[$1.$] $X \subset {\mathbb P}^r _K$ is linearly normal and $X$ is normal;
\item[$2.$] $X \subset {\mathbb P}^r _K$ is non-linearly normal or $X$ is non-normal.
\end{enumerate}
By $(6.4.6)$ and $(9.2)$ in \cite{Fu}, $X$ is of the first type if and only if it is a normal Del Pezzo variety. Also $X$ is of the second type if and only if $X = \pi_p (\widetilde{X})$ where $\widetilde{X} \subset {\mathbb P}^{r+1} _K$ is a variety of minimal degree and of codimension at least two and $\pi_p : \widetilde{X} \rightarrow {\mathbb P}^r _K$ is the linear projection of $\widetilde{X}$ from a closed point $p$ in ${\mathbb P}^{r+1} _K \setminus \widetilde{X}$. For details, we refer the reader to Notation and Remarks \ref{2.3 Notation and Remarks}. Smooth Del Pezzo varieties are completely classified by T. Fujita (cf. $(8.11)$ and $(8.12)$ in \cite{Fu}). Also Fujita proved many important properties of singular varieties of almost minimal degree in \cite{Fu1} and \cite{Fu2}.

Now, a natural approach to understand varieties of almost minimal degree which are not normal Del Pezzo is to investigate simple projections of varieties of minimal degree. In this situation, i.e. in the case where $X = \pi_p (\widetilde{X})$, one can naturally expect that all the properties of $X$ may be precisely described in terms of the relative location of $p$ with respect to $\widetilde{X}$. For the cohomological and local properties of $X$, this expectation turns out to be true in \cite{B-Sche}. Indeed those properties are governed by the secant locus $\Sigma_p (\widetilde{X})$ of $\widetilde{X}$ with respect to $p$, which is the scheme-theoretic intersection of $\widetilde{X}$ and the union of all secant lines to $X$ passing through $p$. In other words, $\Sigma_p (\widetilde{X})$ is the correct object depending on the relative location of $p$ with respect to $\widetilde{X}$ and needed to determine the cohomological and local properties of $X$.

Along these lines, the main purpose of the present paper is to classify the
pair $(\widetilde{X},p)$ via the structure of the secant locus $
\Sigma_p (\widetilde{X})$. To this end, we first focus on solving the
following problem:\\

\begin{enumerate}
\item[(1.1)] Let $\widetilde{X} \subset {\mathbb P}^{r+1} _K$ be a
variety of minimal degree and of codimension at least two. Then classify
all the possible secant loci $\Sigma_p (\widetilde{X})$ of $\widetilde{X}$
with respect to $p$, when $p$ varies in ${\mathbb P}^{r+1} _K \setminus \widetilde{X}$.\\
\end{enumerate}

The classification theory of varieties of minimal degree says that $\widetilde{X}$ is either (a cone over) the Veronese surface in ${\mathbb P}^5 _K$ or else a rational normal scroll of degree $\geq 3$. When $\widetilde{X}$ is the Veronese surface in ${\mathbb P}^5 _K$, it is well-known that $\Sigma_p (\widetilde{X})$ is either empty or a smooth plane conic. The case where $\widetilde{X}$ is a cone over the Veronese surface can be easily dealt with from this fact. For details, see Remark~\ref{6.3 Remark}.

When $\widetilde{X}$ is a rational normal scroll of degree $\geq 3$, Theorem~\ref{thm:classificationsecantloci} and Theorem~\ref{3.3 theorem} show that there are six different possibilities for the secant locus $\Sigma_p (\widetilde{X})$. More precisely, let $\widetilde{X}$ be a cone over a smooth rational normal scroll $\widetilde{X} _0$, and let $\mbox{Vert}(\widetilde{X})$ be the set of all vertex points of $\widetilde{X}$. If $\widetilde{X}$ is smooth, i.e. $\mbox{Vert}(\widetilde{X}) = \emptyset$, then Theorem \ref{thm:classificationsecantloci} says that $\Sigma_p (\widetilde{X})$ is either empty ($=\emptyset$) or a double point ($=2q$) or the union of two simple points ($= (q_1 , q_2)$) or a smooth plane conic ($=C$) or the union of two distinct coplanar lines ($=L_1 \cup L_2$) or a smooth quadric surface ($=Q$). Moreover, for appropriate choice of the pair $(\widetilde{X},p)$ each of these six cases may be realized. When $\mbox{Vert}(\widetilde{X})$ is non-empty, Theorem~\ref{3.3 theorem} shows that $\Sigma_p (\widetilde{X})$ is the join of $\mbox{Vert}(\widetilde{X})$ with $\Sigma_{p_0} (\widetilde{X}_0)$ for some $p_0 \in \langle \widetilde{X}_0 \rangle \setminus \widetilde{X}_0$, where $\tilde X$ is a cone over the smooth scroll $\tilde X_0$.

According to this, the secant locus $\Sigma_p (\widetilde{X})$ can be of
six different types. This gives a decomposition of ${\mathbb P}^{r+1} _K \setminus \widetilde{X}$ into six disjoint strata, which will be called \textit{the secant stratification of $\widetilde{X}$}.

Our next goal is to describe this stratification in geometric terms. Suppose that $\widetilde{X}$ is smooth and let $\Phi := \{ \emptyset, 2q ,(q_1 , q_2), C , L_1 \cup L_2  ,Q \}$ be the set of symbols defined in the previous paragraph. Then Theorem~\ref{thm:classificationsecantloci} implies that \\
\begin{enumerate}
\item[(1.2)] \quad \quad \quad \quad \quad  ${\mathbb P}^{r+1} _K \setminus \widetilde{X} = \bigcup_{* \in \Phi} ~ SL^{*} (\widetilde{X})$\\
\end{enumerate}
is the secant stratification of $\widetilde{X}$ where
   \[ SL^{*} (\widetilde{X}) := \{ p \in {\mathbb P}^{r+1} _K \setminus
      \widetilde{X} ~|~ \Sigma_p (\widetilde{X}) = * \}.
   \]
As said already above, in the general case, Theorem~\ref{3.3 theorem} enables us to obtain the secant stratification of $\widetilde{X}$ from that of $\widetilde{X}_0$.

In Theorem~\ref{4.2 Theorem} and Theorem~\ref{5.3 Theorem}, we give an explicit geometric description of all the strata $SL^{*} (\widetilde{X})$ in the smooth and in the general case, respectively. Each stratum is shown to be a quasi-projective variety. Our result shows that the arithmetic depth of $X$ lies between $h+2$ and $h+5$ where $h$ denotes the dimension of the vertex of $\widetilde{X}$. Moreover we precisely describe the location of the point $p \in {\mathbb P}^{r+1} _K$ according to the value of the arithmetic depth of $X$.

Finally, in Section 6 we apply our results to classify non-normal varieties of almost minimal degree via their arithmetic depth. Theorem~\ref{6.2 Theorem} gives a complete list of all pairs $(\widetilde{X},p)$ for which $X = \pi_p (\widetilde{X})$ is arithmetically Cohen-Macaulay and hence a non-normal Del Pezzo variety. If $X$ is not a cone, then $\widetilde{X}$ is either $S(a) \subset {\mathbb P}^{a}_K$ with $a \geq 3$, or $S(1,b) \subset {\mathbb P}^{b+3} _K$ with $b \geq 2$, or $S(2,b) \subset {\mathbb P}^{b+4} _K$ with $b \geq 2$, or $S(1,1,c) \subset {\mathbb P}^{c+4}_K$ with $c \geq 1$. This provides a complete picture of non-normal Del Pezzo varieties from the view point of linear projections and normalizations. Also this reproves a result of T. Fujita which states that if $X \subset \mathbb P^r_K$ is a non-normal Del Pezzo variety and not a cone, then the dimension of $X$ is $\leq 3$, while there is no upper bound on the degree of $X$. See $(2.9)$ in \cite{Fu2} and $(9.10)$ in \cite{Fu}. \\

\noindent {\bf Acknowledgement} : This paper was started when the second named author was conducting Post Doctorial Research at the Institute of Mathematics in the University of Zurich. He thanks to them for their hospitality. The first named author thanks to the KIAS in Seoul and the KAIST in Daejeon for their hospitality and financial support offered during the preparation of this paper.
\vspace{0.3 cm}

\section{Preliminaries}
\label{2. Preliminaries}

\begin{notation and remark} \label{2.1 Notation and Remarks}
Let $\tilde X \subseteq {\mathbb P}^{r + 1}_K$ be a variety of minimal degree. That is, $\tilde X \subseteq {\mathbb P}^{r + 1}_K$ is a non-degenerate
irreducible projective subvariety of ${\mathbb P}^{r + 1}_K$ such that $\deg (\tilde X) = \codim (\tilde X) + 1$. According to the well-known classification of varieties of minimal degree (cf. \cite[Theorem 19.9]{Ha}), $\tilde X \subseteq {\mathbb P}^{r + 1}_K$ is either
\vspace{0.1 cm}

\renewcommand{\descriptionlabel}[1]%
             {\hspace{\labelsep}\textrm{#1}}
\begin{description}
\setlength{\labelwidth}{13mm}
\setlength{\labelsep}{1.5mm}
\setlength{\itemindent}{0mm}

\item[ ] (i) $\mathbb P^{r+1}_K$;
\vspace{0.1 cm}

\item[ ] (ii) a quadric hypersurface;
\vspace{0.1 cm}

\item[ ] (iii) (a cone over) the Veronese surface in ${\mathbb P} ^5_K$; or
\vspace{0.1 cm}

\item[ ] (iv) a rational normal scroll.
\end{description}
\vspace{0.1 cm}

\noindent In particular $\tilde X$ is always arithmetically Cohen-Macaulay. \qed
\end{notation and remark}
\vspace{0.1 cm}

\begin{notation and remark} \label{2.2 Notation and Remarks}
Let $\tilde X \subseteq {\mathbb P}^{r + 1}_K$ be as above and let $p \in {\mathbb P}^{r + 1}_K \setminus \tilde
X$ be a fixed closed point. The union of all secant lines to $X$ passing through $p$ is called the \textit{secant cone of} $\tilde X$
with respect to $p$ and denoted by $\Sec _p(\tilde X)$, i.e.
\begin{equation*}
\Sec _p(\tilde X):= \bigcup_{q \in \widetilde{X},~ \mbox{length}({\tilde X} \cap \langle p , q \rangle ) \geq 2} ~ \langle p , q \rangle .
\end{equation*}
We use the convention that $\Sec _p(\tilde X)= \{ p \}$ when there is no secant line to $X$ passing through $p$. Observe that $\Sec _p (\tilde X)$ is a cone with vertex $p$. We define the \textit{secant locus} $\Sigma _p(\tilde X)$ of $\tilde X$ with respect to $p$ as the scheme-theoretic intersection of $\widetilde{X}$ and $\Sec _p(\tilde X)$. Therefore
\begin{equation*}
\Sigma _p(\tilde X)_{\mbox{red}} = \{ q \in \tilde X ~|~ \mbox{length} ( \widetilde{X} \cap \langle p,q \rangle ) \geq 2 \} .
\end{equation*}
Thus $\Sigma _p(\tilde X)$ is the \textit{entry locus} of $\tilde X$ with respect to $p$ in the sense of \cite{Ru}.\qed
\end{notation and remark}
\vspace{0.1 cm}

\begin{notation and remarks} \label{2.3 Notation and Remarks}
(A) Let $\tilde X \subseteq {\mathbb P} ^{r + 1}_K$ and $p \in {\mathbb P}^r_K \setminus \tilde X$ be as
above. We fix a projective space ${\mathbb P}^r_K$ and consider the linear projection
\begin{equation*}
\pi _p : \widetilde{X} \rightarrow X_p:= \pi _p(\tilde X) \subseteq {\mathbb P}^r_K
\end{equation*}
of $\widetilde{X}$ from $p$. As the morphism $\pi_p : \widetilde{X} \rightarrow X_p$ is finite, we have
\begin{equation*}
\codim (\tilde X) = \codim (X_p) + 1 \leq \deg (X_p)
      \big\arrowvert \deg (\tilde X) = \codim (\tilde X) + 1 .
\end{equation*}
If $\codim (\tilde X) = 1$, then $\tilde X$ is a quadric and $\pi_p$ is a double covering of $\mathbb P^r _K$. On the other hand, if $\codim (\tilde X) > 1$ then
$\pi_p$ is birational and $X_p \subseteq {\mathbb P}^r_K$ is of almost minimal degree $($that is $\deg (X_p) = \codim (X_p) + 2)$.
\vspace{0.1 cm}

\noindent (B) Assume now, that $\codim (\tilde X) > 1$. Let $\depth (X_p)$ denote the {\it arithmetic depth of $X_p$}. Then, according to
\cite[Theorem 1.3]{B-Sche}, there are the following two cases:
\vspace{0.1 cm}

\renewcommand{\descriptionlabel}[1]%
             {\hspace{\labelsep}\textrm{#1}}
\begin{description}
\setlength{\labelwidth}{13mm}
\setlength{\labelsep}{1.5mm}
\setlength{\itemindent}{0mm}

\item[(2.1)] {\it If $X_p$ is smooth, then $\tilde X$ is smooth and $\pi_p : \tilde X \rightarrow X_p$ is an isomorphism. In this case, $\depth (X_p) = 1$ and $X_p$ is non-linearly normal.}
\item[(2.2)] {\it If $X_p$ is singular, then $\Sec _p(\tilde X) = {\mathbb P}^{t - 1}_K \subset \mathbb P^{r+1} _K$ for some $t \geq 2$ and $\Sigma _p(\tilde X) \subseteq {\mathbb P} ^{t - 1}_K$ is a hyperquadric. In this case, $\pi_p(\Sigma _p(\tilde X)) = {\mathbb P} ^{t - 2}_K$ is the non-normal locus of $X_p$ and
  \begin{equation*}
  \depth (X_p) = t.
  \end{equation*}
Therefore $X_p$ is non-normal and linearly normal.}
\end{description}

\noindent (C) Suppose that $\widetilde{X}$ is smooth. Then $\mbox{\rm Sec}(\tilde X)$ will denote the secant variety of $\tilde X$, i.e. the closure of the union of chords joining pairs of distinct points of $\tilde X$. Also $\mbox{\rm Tan} (\tilde X)$ will denote the tangent variety of $\tilde X$, i.e. the closure of the union of the tangent spaces of $\tilde X$. Thus $X_p$ is smooth if and only if $p \notin \mbox{\rm Sec}(\tilde X)$. \qed
\end{notation and remarks}
\vspace{0.3 cm}

\section{Possible Secant Loci} \label{5. Possible Secant Loci}
Throughout this section we keep the previously introduced notation. Let $\tilde X \subseteq {\mathbb P}^{r + 1}_K$ be a rational normal scroll. The aim of this section is to show which secant loci $\Sigma _p (\tilde X) \subseteq \mbox{Sec}_p (\tilde X) = {\mathbb P}^{t - 1} _K$ may occur at all.

We first treat the case in which $\tilde X$ is a smooth rational normal scroll.

\begin{notation and remark} \label{3.1 Notation and Remarks}
(A) Let $\widetilde{X} \subset {\mathbb P}^{r+1} _K$ be a smooth rational normal scroll. Thus there is a projection morphism $\varphi : \widetilde{X} \rightarrow {\mathbb P}^1 _K$. For each $x \in \mathbb P^1 _K$, let $\mathbb{L} (x)=\varphi^{-1}(x)$ denote the ruling of $\widetilde{X}$ over $x$.\\
(B) Let $q_1 , q_2$ be two distinct points in $\mathbb L (x)$ for some $x \in \mathbb P^1 _K$. According to Remark 7.4.(B) in \cite{B-Sche} we have $T_{q_1} \tilde X \cap T_{q_2} \tilde X = \mathbb L (x)$. \qed
\end{notation and remark}
\vspace{0.1 cm}

\begin{theorem}\label{thm:classificationsecantloci}
Let $\widetilde{X} \subset {\mathbb P}^{r+1} _K$ be a smooth rational normal scroll and of codimension at least $2$ and let
$p \in \mbox{\rm Sec}(\widetilde{X}) \setminus \widetilde{X}$. Then either
\begin{enumerate}
\item[$(a)$] $\mbox{Sec}_p (\widetilde{X}) = {\mathbb P}^1 _K$ and
$\Sigma_p (\widetilde{X}) \subset {\mathbb P}^1 _K$ is either a double
point or the union of two simple points.
\item[$(b)$] $\mbox{Sec}_p (\widetilde{X}) = {\mathbb P}^2 _K$ and $\Sigma_p (\widetilde{X}) \subset {\mathbb P}^2 _K$ is either a smooth conic or the union of a line $L$ which is contained in a ruling $\mathbb{L} (x) \subset \widetilde{X}$ and a line section $L'$ of $\widetilde{X}$.
\item[$(c)$] $\mbox{Sec}_p (\widetilde{X}) = {\mathbb P}^3 _K$ and $\Sigma_p (\widetilde{X}) \subset {\mathbb P}^3 _K$ is a smooth quadric surface.
\end{enumerate}
\end{theorem}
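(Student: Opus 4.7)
\medskip
\noindent\textbf{Proof plan.} The starting point is Notation and Remarks \ref{2.3 Notation and Remarks}. Since $p \in \Sec(\widetilde{X}) \setminus \widetilde{X}$, part (C) there yields that $X_p$ is singular, so case (2.2) of part (B) applies and immediately gives $\Sec_p(\widetilde{X}) = \mathbb{P}^{t-1}_K$ together with the fact that $\Sigma_p(\widetilde{X}) \subseteq \mathbb{P}^{t-1}_K$ is a hyperquadric, for some integer $t \geq 2$. It thus remains to prove $t \in \{2, 3, 4\}$ and to identify the type of quadric in each case.

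I would first record two general constraints driving the entire analysis. (i) $\Sigma_p(\widetilde{X})$ is not contained in any single ruling $\mathbb{L}(x)$, for otherwise $\mathbb{P}^{t-1}_K = \langle \Sigma_p(\widetilde{X}) \rangle \subseteq \mathbb{L}(x) \subseteq \widetilde{X}$ would force the scheme-theoretic intersection $\mathbb{P}^{t-1}_K \cap \widetilde{X}$ to equal $\mathbb{P}^{t-1}_K$ itself, which is not a hyperquadric. (ii) Whenever $q_1 \neq q_2$ are distinct points of $\Sigma_p(\widetilde{X})$ collinear with $p$, they lie in distinct rulings: if both were in $\mathbb{L}(x)$, then $\langle q_1, q_2 \rangle \subseteq \mathbb{L}(x) \subseteq \widetilde{X}$ would contain $p$, contradicting $p \notin \widetilde{X}$.

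The three allowed values $t = 2, 3, 4$ can then be handled in turn. The case $t = 2$ follows at once from (ii): a length-$2$ subscheme of $\mathbb{P}^1_K$ is either a double point (the secant through $p$ being tangent to $\widetilde{X}$) or two simple points in distinct rulings. For $t = 3$, the plane conic is smooth, a pair of distinct lines $L_1 \cup L_2$, or a double line; in the reducible case the lines meet in a point, and (i)--(ii) force exactly one of them, say $L_1$, to lie inside a ruling $\mathbb{L}(x)$, while the other, $L_2$, is a line of $\widetilde{X}$ crossing rulings, i.e.\ a line section. The configurations ``two coplanar line sections'' and ``double line'' would be ruled out by invoking the tangent-rigidity of Notation and Remark \ref{3.1 Notation and Remarks}(B): each of them forces $\mathbb{P}^{t-1}_K = \Sec_p(\widetilde{X}) \subseteq T_{q_0}\widetilde{X}$ for $q_0$ ranging over a positive-dimensional family inside a common ruling, making $T_{q_1}\widetilde{X} \cap T_{q_2}\widetilde{X}$ strictly larger than $\mathbb{L}(x)$ for two such points, contradicting Notation and Remark \ref{3.1 Notation and Remarks}(B). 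The case $t = 4$ is analogous: every singular quadric surface type (quadric cone, pair of planes, double plane) produces the same tangent-space anomaly and is thereby excluded, leaving only the smooth quadric surface.

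The main obstacle will be excluding $t \geq 5$. My approach is a dimension count built around the secant involution $\sigma$ on $\Sigma_p(\widetilde{X})$ (swapping the two intersection points of a proper secant through $p$): together with the scroll projection $\varphi : \widetilde{X} \to \mathbb{P}^1_K$ this defines a rational map
\[
(\varphi, \varphi \circ \sigma) : \Sigma_p(\widetilde{X}) \dashrightarrow (\mathbb{P}^1_K \times \mathbb{P}^1_K)/S_2,
\]
whose image has dimension at most $2$. A generic fiber over a pair $\{x_1, x_2\}$ of distinct ruling parameters identifies with $\mathbb{L}(x_1) \cap (\langle p \rangle * \mathbb{L}(x_2)) \subseteq \mathbb{P}^{r+1}_K$; using $r+1 = \deg(\widetilde{X}) + \dim(\widetilde{X})$ with $\deg(\widetilde{X}) \geq \dim(\widetilde{X}) + 1$ (from codimension at least $2$), a direct count shows this intersection is generically finite, so $\dim \Sigma_p(\widetilde{X}) \leq 2$ and hence $t \leq 4$. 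The delicate issue is uniformity: one must ensure finiteness of fibers outside a small non-generic locus on any scroll type $S(a_1, \ldots, a_d)$ allowed by the hypotheses, and separately handle the ``tangent boundary'' of $\sigma$ where $q = \sigma(q)$ by stratifying $\Sigma_p(\widetilde{X})$ with respect to the involution.
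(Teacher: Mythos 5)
Your opening steps agree with the paper: you invoke (2.2) to get $\mbox{Sec}_p(\widetilde X)=\mathbb P^{t-1}_K$ with $\Sigma_p(\widetilde X)$ a hyperquadric, you observe that $\Sigma_p(\widetilde X)$ cannot lie in a single ruling, and the case $t=2$ is immediate. The first genuine gap is your exclusion of $t\ge 5$. The paper does this in three lines: since $f=\varphi|_{\Sigma_p(\widetilde X)}:\Sigma_p(\widetilde X)\to\mathbb P^1_K$ is surjective and $\dim\Sigma_p(\widetilde X)=t-2$, any two fibres of $f$ are disjoint closed subsets of $\mathbb P^{t-1}_K$ of dimension $\ge t-3$, which is impossible once $2(t-3)-(t-1)=t-5\ge 0$. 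Your substitute, the map $(\varphi,\varphi\circ\sigma)$ built from the secant involution, needs an \emph{upper} bound on the fibre dimension of $\mathbb L(x_1)\cap\operatorname{Cone}_p(\mathbb L(x_2))$; but dimension counts for intersections of linear subspaces only give lower bounds, and these two subspaces are in highly special position relative to $\widetilde X$ and $p$. You acknowledge the problem yourself (``the delicate issue is uniformity''), and since the theorem must hold for \emph{every} $p\in\mbox{Sec}(\widetilde X)\setminus\widetilde X$, not a generic one, ``a direct count shows this intersection is generically finite'' is an unproved assertion, not a proof. As written, this step does not close.

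The second gap is your uniform mechanism for eliminating the singular hyperquadrics, namely that each of them ``forces $\mbox{Sec}_p(\widetilde X)\subseteq T_{q_0}\widetilde X$ for $q_0$ ranging over a positive-dimensional family inside a common ruling.'' This works only when the family really does lie in one ruling (the double linear space with $\varphi|_E$ constant, and the pair of planes for $t=4$), and it fails for precisely the subcases that carry the content of the proof. (i) A double line $2E$ with $E$ a line section: the points of $E$ lie in pairwise distinct rulings, so Notation and Remark~\ref{3.1 Notation and Remarks}.(B) cannot be applied to two of them; the paper instead shows $\langle p,E\rangle\cap\mathbb L(x_2)$ is positive-dimensional inside $T_{q_2}\widetilde X$ and deduces $\dim T_{q_1}\widetilde X\ge n+1$, where $n=\dim\widetilde X$. (ii) Two coplanar line sections: here there is no tangency along a positive-dimensional family at all; what is needed is that two distinct line sections of a smooth scroll are disjoint, hence cannot be distinct and coplanar. (iii) The quadric cone for $t=4$: this quadric is reduced and irreducible, so $p\in T_{q_0}\widetilde X$ holds only at the vertex; the paper excludes it by noting that all but at most one of the lines of the cone through its vertex lie in a single ruling $\mathbb L(x)$, forcing the whole cone, and then $p$, into $\mathbb L(x)\subseteq\widetilde X$. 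Each of (i)--(iii) requires its own argument, and your proposal supplies none of them.
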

\vspace{0.1 cm}

\begin{proof}
According to $(2.1)$ and $(2.2)$, $\mbox{Sec}_p (\widetilde{X}) = {\mathbb P}^{t-1} _K$ is a linear subspace for some $t \geq 2$ and $\Sigma_p (\widetilde{X}) \subset {\mathbb P}^{t-1} _K$ is a hyperquadric. So, if $t=2$, we get statement $(a)$. Therefore we may assume that $t \geq 3$.

Let $f : \Sigma_p (\widetilde{X}) \rightarrow {\mathbb P}^1 _K$ be the restriction of $\varphi$ to $\Sigma_p (\widetilde{X})$. Clearly $f$ is surjective since otherwise $\Sigma_p (\widetilde{X}) \subset \mathbb{L}(x)$ for some $x \in {\mathbb P}^1 _K$ and so $p \in \langle  \Sigma_p (\widetilde{X}) \rangle \subset \mathbb{L}(x)$, which contradicts the assumption that $p \notin \widetilde{X}$.

Our next claim is that $t \leq 4$. Assume to the contrary that $t \geq 5$. As $f$ is surjective, for any two distinct points $x_1,x_2 \in {\mathbb P}^1 _K$, the two fibers $f^{-1} (x_i)$ are $(t-3)$-dimensional projective algebraic sets in $\mbox{Sec}_p (\widetilde{X}) = {\mathbb P}^{t-1} _K$, and so
\begin{equation*}
\mbox{dim}~ f^{-1} (x_1) \cap f^{-1} (x_2) \geq (t-3)+(t-3)-(t-1) = t-5 \geq 0.
\end{equation*}
This is impossible since $f^{-1} (x_1)$ and $f^{-1} (x_2)$ are disjoint. This contradiction shows that $t=3$ or $4$.

We now claim that $\Sigma_p (\widetilde{X})$ cannot be a double linear space. If $\Sigma_p (\widetilde{X}) = 2E$ where $E \subseteq {\mathbb P}^{t-1}_K$
is a $(t-2)$ dimensional linear space, then for any $q \in E$ we have $p \in T_q \widetilde{X}$. Clearly $\varphi |_E : E \rightarrow {\mathbb P}^1$ is a constant map if $t =4$. Thus there are the following two possibilities:
\begin{enumerate}
\item[$(i)$] $t \geq 3$ and $\varphi |_E : E \rightarrow {\mathbb P}^1 _K$
is a constant map;
\item[$(ii)$] $t=3$ and $\varphi |_E : E \rightarrow {\mathbb P}^1 _K$ is surjective
(and hence an isomorphism).
\end{enumerate}
In case (i), choose two distinct points $q_1$ and $q_2$ in $E$.
Then, by Notation and Remarks \ref{3.1 Notation and Remarks}.(B) we have
\begin{equation*}
p \in T_{q_1} \widetilde{X} \cap T_{q_2} \widetilde{X} = \mathbb{L}(x) \subset \widetilde{X},
\end{equation*}
which contradicts the assumption that $p \notin \widetilde{X}$. Now consider
case (ii). Then $E$ is a line section of $\varphi : \widetilde{X} \rightarrow {\mathbb P}^1 _K$. Choose two distinct points $q_1$ and $q_2$ in $E$ and let $x_i = \varphi (q_i)$ for $i=1,2$. Since $\langle p,E\rangle $ is contained in $T_{q_2} \widetilde{X}$, the linear space $\langle p,E\rangle  \cap \mathbb{L}(x_2)$ is of positive dimension. Now the equalities
\begin{equation*}
T_{q_1} \widetilde{X} = \langle  \mathbb{L}(x_1),q_2\rangle = \langle \mathbb{L}(x_1),\langle p,E\rangle  \cap \mathbb{L}(x_2)\rangle
\end{equation*}
imply that $\mbox{dim}~T_{q_1} \widetilde{X} \geq n+1$, which is a contradiction. So, $\Sigma_p (\widetilde{X})$ cannot be a double linear space.

If $t=3$ and hence $\mbox{Sec}_p (\widetilde{X}) = {\mathbb P}^2 _K$, then $\Sigma_p (\widetilde{X}) \subset {\mathbb P}^2 _K$ is either the union of two distinct lines $L_1$ and $L_2$ or else a smooth plane conic curve. Consider the first
case. Let $L_1 \cap L_2 = \{ q \}$ and $x = \varphi (q) \in {\mathbb P}^1 _K$. For each $i=1,2$, either $L_i \subset \mathbb{L}(x)$ or else $L_i$ is a line section of $ \widetilde{X}$. If $L_1$ and $L_2$ are both contained in $\mathbb{L}(x)$, then
\begin{equation*}
p  \in \mbox{Sec}_p (\widetilde{X}) = \langle  L_1,L_2\rangle  \subset \mathbb{L}(x) \subset \widetilde{X}
\end{equation*}
which contradicts our assumption that $p \notin \widetilde{X}$. If $L_1$ and $L_2$
are both line sections of $\widetilde{X}$, then $L_1 \cap L_2 = \emptyset$, a contradiction. Therefore, one of the two lines
$L_1$ and $L_2$ is contained in $\mathbb{L}(x)$ and the other one is a line section of $\widetilde{X}$. So, if $t=3$ we are precisely in the situation of statement $(b)$.

Finally let $t =4$. Then we have $\mbox{Sec}_p (\widetilde{X}) = {\mathbb P}^3 _K$. Since $\Sigma_p (\widetilde{X})$ is not a double plane, it is either the union of two distinct planes $E_1$ and $E_2$ or else irreducible. In the first case, $\varphi |_{E_i} : E_i \rightarrow {\mathbb P}^1$ is a constant map for $i=1,2$. Since $E_1 \cap E_2 \neq \emptyset$, this implies that $\Sigma_p (\widetilde{X})$ is contained in $\mathbb{L}(x)$ for some $x \in {\mathbb P}^1 _K$. Therefore
\begin{equation*}
p \in \mbox{Sec}_p (\widetilde{X}) = \langle \Sigma_p (\widetilde{X})\rangle  \subset \mathbb{L}(x) \subset \widetilde{X},
\end{equation*}
which contradicts to the assumption that $p \notin \widetilde{X}$. So $\Sigma_p (\widetilde{X})$ is irreducible. Assume that $\Sigma_p (\widetilde{X})$ is singular. Then $\Sigma_p (\widetilde{X})$ is a cone over a smooth plane conic $F$. If $q$
denotes the vertex point of $\Sigma_p (\widetilde{X})$, the family of lines $\{ L_{\lambda} := \langle q,\lambda\rangle  ~|~\lambda \in F\}$ covers $\Sigma_p (\widetilde{X})$. Since $q \in L_{\lambda}$ for all $\lambda \in F$, there exists at most one $\lambda \in F$ such that $L_{\lambda}$ is a line section of $\widetilde{X}$.
Thus for general $\lambda \in F$, the line $L_{\lambda}$ is contained in $\mathbb{L}(x)$ where $x = \varphi (q)$. But this implies that $\Sigma_p (\widetilde{X}) \subset \mathbb{L}(x)$. Again it follows that $p \in \widetilde{X}$, a contradiction. This completes the proof that $\Sigma_p (\widetilde{X})$ is a smooth quadric surface if $t = 4$.
\end{proof}
\vspace{0.1 cm}

We now consider the case in which the scroll $\tilde X$ is not necessarily smooth. First we introduce some notation.
\vspace{0.1 cm}

\begin{notation and remark}\label{3.2 Notation and Remarks}
Let $\tilde X \subseteq {\mathbb P}^{r + 1}_K$ be a rational normal scroll of codimension at least $2$ and with the vertex $\mbox{Vert}(\tilde X) = {\mathbb P}^h_K$ for some $h \geq -1$. Let $\mbox{dim}~ \widetilde{X} = n+h+1$ and note that $\widetilde{X}$ is a cone over an $n$-fold rational normal scroll $\tilde X_0$ in $\langle \tilde X_0 \rangle = {\mathbb P}^{r - h}_K$. Consider the projection map
\begin{equation*}
\psi : {\mathbb P}^{r + 1}_K \setminus \mbox{Vert}(\tilde X) \twoheadrightarrow \langle \tilde X_0 \rangle = \mathbb P ^{r-h}_K.
\end{equation*}
For a closed point $p \in {\mathbb P}^{r + 1}_K \backslash \mbox{Vert}(\tilde X)$, we denote $\psi (p)$ by $\overline{p}$. \qed
\end{notation and remark}
\vspace{0.1 cm}

\begin{theorem} \label{3.3 theorem}
Assume that the rational normal scroll $\tilde X \subseteq {\mathbb P}^{r + 1}_K$ has codimension at least $2$ and
let $p \in {\mathbb P}^{r + 1}_K \backslash \tilde X$. Then we have either
\vspace{0.1 cm}

\renewcommand{\descriptionlabel}[1]%
             {\hspace{\labelsep}\textrm{#1}}
\begin{description}
\setlength{\labelwidth}{13mm}
\setlength{\labelsep}{1.5mm}
\setlength{\itemindent}{0mm}

\item[{\rm (a)}] $\mbox{\rm Sec}_p(\tilde X) = \langle \mbox{\rm Vert}(\tilde X), p \rangle = {\mathbb P}^{h + 1}$ and $\Sigma _p
(\tilde X) = 2\mbox{\rm Vert}(\tilde X) \subseteq {\mathbb P}^{h + 1} _K$.
\vspace{0.1 cm}

\item[{\rm (b)}] $\mbox{\rm Sec}_p(\tilde X) = \langle \mbox{\rm Vert}(\tilde X), L \rangle = {\mathbb P}^{h + 2}_K$
for some line $L \subseteq \langle \tilde X_0 \rangle$ and either
\vspace{0.1 cm}

\renewcommand{\descriptionlabel}[1]%
             {\hspace{\labelsep}\textrm{#1}}
\begin{description}
\setlength{\labelwidth}{10mm}
\setlength{\labelsep}{1.2mm}
\setlength{\itemindent}{0mm}
\item[\rm (i)] $\Sigma _p(\tilde X) =
\mbox{\rm Join}(\mbox{\rm Vert}(\tilde X), Z) \subseteq {\mathbb P}
^{h + 2}_K$, where $Z \subseteq L$ consists of two simple points; or

\item[\rm (ii)] $\Sigma _p(\tilde X) = 2 \langle \mbox{\rm Vert}
(\tilde X), Z \rangle \subseteq {\mathbb P}^{h + 2}_K$, where
$Z \subseteq L$ consists of one simple point.

\end{description}
\vspace{0.1 cm}

\item[{\rm (c)}] $\mbox{\rm Sec}_p(\tilde X) = \langle \mbox{\rm
Vert}(\tilde X), P \rangle = {\mathbb P}^{h + 3}_K$
for some plane $P \subseteq \langle \tilde X_0 \rangle $
and $\Sigma _p(\tilde X) =
\mbox{\rm Join}(\mbox{\rm Vert}(\tilde X), W)$, where $W \subseteq
P$ is either a smooth conic or the union of two lines $L,
L' \subseteq P$ such that $L \subseteq {\mathbb L}(x)
\cap \tilde X_0$ for some $x \in {\mathbb P}^1_K$ and $L'$ is a line
section of $\tilde X_0$.
\vspace{0.1 cm}

\item[{\rm (d)}] $\mbox{\rm Sec}_p(\tilde X) = \langle \mbox{Vert}
(\tilde X), D \rangle = {\mathbb P}^{h + 4}_K$ for
some $3$-space $D \subseteq \langle \tilde X_0 \rangle$ and
$\Sigma _p(\tilde X) = \mbox{\rm
Join}(\mbox{\rm Vert}(\tilde X), V)$, where $V \subseteq D$ is a
smooth quadric surface.
\end{description}
\end{theorem}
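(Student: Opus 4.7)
The plan is to reduce to the smooth case, i.e.\ to Theorem~\ref{thm:classificationsecantloci} applied to $\tilde X_0$, via the projection $\psi$ centered at $\mathrm{Vert}(\tilde X)$. Since $p \notin \tilde X \supseteq \mathrm{Vert}(\tilde X)$, the point $\bar p := \psi(p)$ is defined in $\langle \tilde X_0 \rangle = \mathbb P^{r-h}_K$. A first observation is that $\bar p \notin \tilde X_0$: for if $\bar p \in \tilde X_0$, then the fiber closure $\langle \mathrm{Vert}(\tilde X), \bar p \rangle$ would lie in $\tilde X$, so that $p \in \tilde X$, contradicting our hypothesis. Choose homogeneous coordinates $(x_0 : \cdots : x_{r+1})$ so that $\mathrm{Vert}(\tilde X)$ is defined by $x_0 = \cdots = x_{r-h} = 0$ and $\langle \tilde X_0 \rangle$ by $x_{r-h+1} = \cdots = x_{r+1} = 0$; then $\tilde X$ is cut out by the same quadrics as $\tilde X_0$, involving only $x_0, \ldots, x_{r-h}$.

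The central step is to establish the two cone identities
\begin{equation*}
\mathrm{Sec}_p(\tilde X) = \bigl\langle \mathrm{Vert}(\tilde X),\ \mathrm{Sec}_{\bar p}(\tilde X_0) \bigr\rangle, \qquad \Sigma_p(\tilde X) = \mathrm{Join}\bigl( \mathrm{Vert}(\tilde X),\ \Sigma_{\bar p}(\tilde X_0) \bigr),
\end{equation*}
with the convention $\mathrm{Sec}_{\bar p}(\tilde X_0) = \{\bar p\}$ when $\bar p \notin \mathrm{Sec}(\tilde X_0)$, and with the second identity replaced by $\Sigma_p(\tilde X) = 2\mathrm{Vert}(\tilde X)$ in that degenerate case. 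To prove them, classify lines $\ell$ through $p$ by whether or not they lie in a $\psi$-fiber. If $\ell$ joins $p$ with a vertex point $v$, then $\ell$ projects to $\{\bar p\}$; parameterizing $\ell$ as $\{s p + t v\}$, any defining quadric $F$ of $\tilde X$ restricts to $s^2 F(\bar p)$, and since $\bar p \notin \tilde X_0$ we may pick $F$ with $F(\bar p) \neq 0$, so $\ell \cap \tilde X$ is the length-two scheme supported at $v$. If instead $\ell$ is not contained in any fiber, then $\psi|_\ell$ is an isomorphism onto $\bar\ell := \psi(\ell)$, and a direct check gives $\ell \cap \tilde X \cong \bar\ell \cap \tilde X_0$ scheme-theoretically; hence $\ell$ is a secant line of $\tilde X$ through $p$ iff $\bar\ell$ is a secant line of $\tilde X_0$ through $\bar p$. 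Combining the two line types yields the first identity, and the second follows because the defining quadrics of $\tilde X$ restricted to $\langle \mathrm{Vert}(\tilde X), \mathrm{Sec}_{\bar p}(\tilde X_0) \rangle$ depend only on the coordinates of the second factor, so they cut out on this linear space precisely the cone over $\Sigma_{\bar p}(\tilde X_0)$ with vertex $\mathrm{Vert}(\tilde X)$.

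With these identities in hand, the theorem follows by applying Theorem~\ref{thm:classificationsecantloci} to $\tilde X_0$. Case (a) corresponds to $\bar p \notin \mathrm{Sec}(\tilde X_0)$: only vertex-lines through $p$ are secant, and the direct calculation above yields $\mathrm{Sec}_p(\tilde X) = \langle \mathrm{Vert}(\tilde X), p \rangle = \mathbb P^{h+1}_K$ together with $\Sigma_p(\tilde X) = 2\mathrm{Vert}(\tilde X)$. Cases (b), (c), (d) correspond to the three possibilities $\dim \mathrm{Sec}_{\bar p}(\tilde X_0) = 1, 2, 3$ of Theorem~\ref{thm:classificationsecantloci}, and in each case joining $\mathrm{Vert}(\tilde X)$ with the entry locus $\Sigma_{\bar p}(\tilde X_0)$ produces precisely the configuration listed; in particular the split (b)(i)/(b)(ii) reflects whether the degree-two hyperquadric $\Sigma_{\bar p}(\tilde X_0) \subset \mathbb P^1_K$ is reduced or is a double point.

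The main obstacle lies in the second paragraph, namely getting the \emph{scheme} structures right on both sides of the cone identity. The set-theoretic version is routine, but the nilpotent structure in case (a) (and in (b)(ii)) depends on the explicit computation $F|_\ell = s^2 F(\bar p)$ combined with the fact that $\tilde X_0$ is defined by quadrics, which prevents higher-order vanishing on the vertex-line. Similarly, the fact that $\mathrm{Join}(\mathrm{Vert}(\tilde X), \Sigma_{\bar p}(\tilde X_0))$ is globally defined by the same quadric equations as $\Sigma_{\bar p}(\tilde X_0)$ is exactly what ensures the join equals the scheme-theoretic intersection $\tilde X \cap \mathrm{Sec}_p(\tilde X)$, rather than a proper subscheme of it.
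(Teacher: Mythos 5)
Your proposal is correct and follows essentially the same route as the paper: both reduce to the smooth case (Theorem \ref{thm:classificationsecantloci}) via the projection $\psi$ from $\mbox{\rm Vert}(\tilde X)$, using the correspondence between secant lines of $\tilde X$ through $p$ and secant lines of $\tilde X_0$ through $\overline{p}$ to identify $\mbox{\rm Sec}_p(\tilde X)$ and $\Sigma_p(\tilde X)$ with the cones over $\mbox{\rm Sec}_{\overline{p}}(\tilde X_0)$ and $\Sigma_{\overline{p}}(\tilde X_0)$ with vertex $\mbox{\rm Vert}(\tilde X)$. The only divergence is in the handling of scheme structures: the paper establishes the join identity only for the reduced loci and recovers the non-reduced cases (a) and (b)(ii) from the hyperquadric structure guaranteed by (2.2), whereas you derive the scheme-theoretic identity directly from the fact that $\tilde X$ is cut out by quadrics involving only the coordinates of $\langle \tilde X_0 \rangle$ --- a slightly more explicit variant of the same argument.
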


\begin{proof}
In view of Theorem~\ref{thm:classificationsecantloci} we may assume that $\mbox{Vert}(\tilde X)
\not= \emptyset $, that is $h \geq 0$. We first show that
\vspace{0.1 cm}

\renewcommand{\descriptionlabel}[1]%
             {\hspace{\labelsep}\textrm{#1}}
\begin{description}
\setlength{\labelwidth}{13mm}
\setlength{\labelsep}{1.5mm}
\setlength{\itemindent}{0mm}

\item[(3.1)] {\it $\mbox{\rm Sec}_p(\tilde X) = \langle \mbox{\rm
Vert}(\tilde X), \mbox{\rm Sec}_{\overline p} (\tilde X_0) \rangle$}
\end{description}
and
\vspace{0.1 cm}

\renewcommand{\descriptionlabel}[1]%
             {\hspace{\labelsep}\textrm{#1}}
\begin{description}
\setlength{\labelwidth}{13mm}
\setlength{\labelsep}{1.5mm}
\setlength{\itemindent}{0mm}

\item[(3.2)] {\it $\Sigma _p(\tilde X)_{\mbox{\rm red}} =
\mbox{\rm Join}(\mbox{\rm Vert}(\tilde X), \Sigma _{\overline p}(\tilde X_0) )$.}
\end{description}
\vspace{0.1 cm}

\noindent Indeed a line $L = {\mathbb P}^1_K \subseteq {\mathbb P}^{r + 1}_K \backslash
\mbox{Vert}(\tilde X)$ is secant to $\tilde X$ if and only if $\overline {L} := \psi (L)$ is secant to $\tilde X _0$ since $\tilde X \cap L$ and $\tilde X_0 \cap \overline{L}$ are isomorphic via $\psi$. This shows (3.1). For the second statement, remember that
   \[ \Sigma _p(\tilde X)
      _{\mbox{\rm red}} = (\mbox{\rm Sec}_p(\tilde X) \cap \tilde X)_{\rm red}
   \]
and that
embedded joins are reduced. Since $\mbox{\rm Sec}_p(\tilde X) = \langle
\mbox{\rm Vert}(\tilde X), \mbox{\rm Sec}_{\overline p} (\tilde X_0) \rangle$
and $\tilde X = \mbox{Join}(\mbox{Vert}(\tilde X), \tilde X _0)$, we obtain
\begin{equation*}
\begin{CD}
\Sigma _p(\tilde X)
_{\rm red} & \quad = \quad  & \big[ \langle \mbox{Vert} (\tilde X), \mbox{Sec}
_{\overline p} (\tilde X_0) \rangle \cap \mbox{Join}(\mbox{Vert}(\tilde X),
\tilde X_0) \big] _{\rm red} \\
& \quad = \quad  & \mbox{Join}(\mbox{Vert}(\tilde X), \tilde X_0 \cap \mbox{Sec}
_{\overline p}(\tilde X_0)) = \mbox{Join}(\mbox{Vert}(\tilde X),
\Sigma _{\overline p}(\tilde X)).
\end{CD}
\end{equation*}
\vspace{0.1 cm}

\noindent Now our assertion is shown by combining (3.1), (3.2) and Theorem \ref{thm:classificationsecantloci}.
\end{proof}
\vspace{0.3 cm}

\section{The Secant Stratification in the Smooth Case}\label{4. The Secant Stratification in the Smooth Case}

According to Theorem \ref{thm:classificationsecantloci} there are six different
possibilities for the secant locus $\Sigma _p(\tilde X)$ of the smooth rational
normal scroll $\tilde X \subseteq {\mathbb P}^{r + 1}_K$ with respect
to the point $p \in {\mathbb P}^{r + 1}_K \backslash \tilde X$. This gives a decomposition of ${\mathbb P}^{r + 1}_K \backslash
\tilde X$ into six disjoint strata. The aim of this section is to describe this stratification in geometric terms.
\vspace{0.1 cm}

\begin{definition and remark}\label{4.1 Definition and Remark}
(A) (cf. Example 8.26 in \cite{Ha}) Let
\begin{equation*}
\tilde X = S(a_1 , \ldots , a_n) \subseteq {\mathbb P}^{r + 1}_K
\end{equation*}
be an $n$-dimensional smooth rational normal scroll of type $(a_1 , \ldots , a_n)$ with $1 \leq a_1 \leq \ldots \leq a_n$. We assume that $\mbox{codim}(\widetilde{X}) \geq 2$, or equivalently, $a_1 + \ldots + a_n \geq 3$. We write
$$k := \begin{cases} \max \{ i \in \{ 1, \ldots , n \} \big\arrowvert a_i = 1 \} & \mbox{if $a_1 = 1$, and} \\
0     & \mbox{if $a_1 >1$}\end{cases}$$
and
$$m := \begin{cases} \max \{ i \in \{ k + 1, \ldots , n \} \big\arrowvert a_i = 2 \} & \mbox{if $a_{k+1} =2$, and} \\
k & \mbox{if $a_{k + 1} \neq 2$.} \end{cases}$$
So, we have $k \in \{ 0, \ldots , n \} $ and $m \in \{ k, \ldots , n \} $ and may write
\begin{equation*}
\tilde X = S(\underbrace{1, \ldots , 1}_{k} ,  \underbrace{2, \ldots , 2}_{m - k} , a_{m + 1}, \ldots , a_n) = S(\underline 1,
\underline 2, \underline a)
\end{equation*}

\noindent with $3 \leq a_{m + 1} \leq \ldots \leq a_n$.
\vspace{0.1 cm}

\noindent (B) Consider the (possibly empty) scrolls
\vspace{0.1 cm}

\begin{equation*}
S(\underline 1) = S(\underbrace{1, \ldots , 1}_{k}) \subset \mathbb P^{2k-1} _K \quad \mbox{and} \quad S(\underline 2) = S(\underbrace{2, \ldots , 2}_{m-k}) \subset \mathbb P^{3m-3k-1} _K
\end{equation*}

\noindent which are contained in $\widetilde{X}$. Obviously $S(\underline 1) \cong \mathbb P^1 _K \times \mathbb P^{k-1} _K$ and $S(\underline 2) \cong C \times \mathbb P^{m-k-1} _K$ for a smooth plane conic $C \subset \mathbb P^2 _K$. For each $\alpha \in \mathbb P^{k-1} _K$, we denote the line $\mathbb P^1 _K \times \{ \alpha \}$ in $S(\underline 1)$ by $L_{\alpha}$. Similarly, for each $\beta \in \mathbb P^{m-k-1} _K$, we denote the smooth plane conic $C \times \{ \beta \}$ in $S(\underline 2)$ by $C_{\beta}$. Also $S(\underline 2)$ can be regarded as a subvariety of the Segre variety
\begin{equation*}
\bigtriangleup := \dot \bigcup _{\beta \in
	  {\mathbb P}^{m - k - 1}_K} \langle C_\beta \rangle = {\mathbb P}^2_K \times {\mathbb
	  P}^{m - k - 1}_K \subset \mathbb P^{3m-3k-1} _K .
\end{equation*}

\noindent (C) We define the following sets in ${\mathbb P}^{r + 1}_K \backslash \tilde X$:

\renewcommand{\descriptionlabel}[1]%
             {\hspace{\labelsep}\textrm{#1}}
\begin{description}
\setlength{\labelwidth}{13mm}
\setlength{\labelsep}{1.5mm}
\setlength{\itemindent}{0mm}

\item[(4.1)] $SL^\emptyset (\tilde X):= \{ p \in {\mathbb P}^{r + 1}_K
\backslash \tilde X \big\arrowvert \Sigma _p(\tilde X) =
\emptyset \}$;
\vspace{0.1 cm}

\item[(4.2)] $SL^{q_1, q_2}(\tilde X):= \{ p \in {\mathbb P}^{r + 1}_K
\backslash \tilde X \big\arrowvert \Sigma _p(\tilde X)$
consists of two simple points $\}$;
\vspace{0.1 cm}

\item[(4.3)] $SL^{2q}(\tilde X):= \{ p \in {\mathbb P}^{r + 1}_K
\backslash \tilde X \big\arrowvert \Sigma _p(\tilde X)$
is a double point in some straight line ${\mathbb P}^1_K \subseteq {\mathbb P}
^{r + 1}_K$ $\}$;
\vspace{0.1 cm}

\item[(4.4)] $SL^{L_1 \cup L_2}(\tilde X):= \{ p \in {\mathbb P}^{r + 1}_K
\backslash \tilde X \big\arrowvert \Sigma _p(\tilde X)$
is the union of two distinct coplanar simple lines $\}$;
\vspace{0.1 cm}

\item[(4.5)] $SL^C(\tilde X):= \{ p \in {\mathbb P}^{r + 1}_K
\backslash \tilde X \big\arrowvert \Sigma _p(\tilde X)$ is a smooth plane
conic $\}$;
\vspace{0.1 cm}

\item[(4.6)] $SL^Q(\tilde X):= \{ p \in {\mathbb P}^{r + 1}_K
\backslash \tilde X \big\arrowvert \Sigma _p(\tilde X)$ is a smooth quadric
in a $3$-space $\}$.
\end{description}
\vspace{0.1 cm}

\noindent According to Theorem \ref{thm:classificationsecantloci} we have
\vspace{0.1 cm}

\renewcommand{\descriptionlabel}[1]%
             {\hspace{\labelsep}\textrm{#1}}
\begin{description}
\setlength{\labelwidth}{13mm}
\setlength{\labelsep}{1.5mm}
\setlength{\itemindent}{0mm}

\smallskip

\item[(4.7)  ] ${\mathbb P}^{r + 1}_K =$
\begin{equation*}
\quad \quad \quad \tilde X {\dot \cup } SL^\emptyset (\tilde X) {\dot \cup} SL^{q_1, q_2}(\tilde X) {\dot \cup } SL^{2q}(\tilde
X) {\dot \cup } SL^{L_1 \cup L_2}(\tilde X) {\dot \cup } SL^C(\tilde X) {\dot \cup } \ SL^Q(\tilde X).
\end{equation*}
\end{description}
This decomposition will be called \textit{the secant stratification of} $\widetilde{X} \subset \mathbb P^{r+1} _K$. \qed
\end{definition and remark}
\vspace{0.1 cm}

We now describe the strata (4.1) - (4.6).
\vspace{0.1 cm}

\begin{theorem}\label{4.2 Theorem} Assume that the rational normal scroll $\tilde X
\subseteq {\mathbb P}^{r + 1}_K$ is of codimension at least $2$ and
smooth. Let (cf. Definition and Remark \ref{4.1 Definition and Remark}.(B))
   \begin{align*} &A:= \langle S(\underline 1) \rangle; \\
      &B := \mbox{\rm Join}(S(\underline{1}),\widetilde{X});\\
	  &U:= \mbox{\rm Join}(A, \bigtriangleup).
   \end{align*}
Then $\tilde X \subseteq B, \ A \subseteq B \cap U, \ B\cup U \subseteq
\mbox{\rm Tan}(\tilde X)$ and

\renewcommand{\descriptionlabel}[1]%
             {\hspace{\labelsep}\textrm{#1}}
\begin{description}
\setlength{\labelwidth}{13mm}
\setlength{\labelsep}{1.5mm}
\setlength{\itemindent}{0mm}

\smallskip

\item[{\rm (a)}] $SL^\emptyset (\tilde X) = {\mathbb P}^{r + 1}_K
\backslash \mbox{\rm Sec}(\tilde X)$;

\smallskip

\item[{\rm (b)}] $SL^{q_1, q_2}(\tilde X) = \mbox{\rm Sec}(\tilde X)
\backslash \mbox{\rm Tan}(\tilde X)$;

\smallskip

\item[{\rm (c)}] $SL^{2q}(\tilde X) = \mbox{\rm Tan}(\tilde X)
\backslash (B \cup U)$;

\smallskip

\item[{\rm (d)}] $SL^{L_1 \cup L_2}(\tilde X) = B \backslash (A \cup
\tilde X)$;

\smallskip

\item[{\rm (e)}] $SL^C(\tilde X) = U \backslash B$;

\smallskip

\item[{\rm (f)}] $SL^Q (\tilde X) = A \backslash \tilde X$.
\end{description}
\end{theorem}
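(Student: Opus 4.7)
My plan is to prove the six equalities simultaneously by a double disjoint-cover argument. First I would verify the auxiliary containments $\tilde X \subseteq B$, $A \subseteq B \cap U$, and (deferred until the end) $B \cup U \subseteq \Tan(\tilde X)$. The first two are essentially formal: $\tilde X \subseteq B$ follows from $S(\underline 1) \subseteq \tilde X$; the identity $A = \langle S(\underline 1)\rangle = \Sec(S(\underline 1))$ holds because $S(\underline 1)$ is a smooth rational normal scroll of minimal degree whose secant variety fills its linear span, so $A \subseteq \operatorname{Join}(S(\underline 1),\tilde X) = B$; and $A \subseteq \operatorname{Join}(A,\bigtriangleup) = U$ is tautological. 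Granted these, the right-hand sides of (a)--(f) together with $\tilde X$ form a disjoint cover of $\mathbb P^{r+1}_K$; combined with (4.7), it is enough to verify one inclusion per case.

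I would begin with (f). Under the Segre presentation $S(\underline 1) = \mathbb P^1_K \times \mathbb P^{k-1}_K \subseteq A$, a point $p \in A \setminus \tilde X$ corresponds to a rank-two $2\times k$ matrix; its row-space $\lambda \subseteq K^k$ is two-dimensional, and the smooth quadric surface $Q_\lambda := \mathbb P^1_K \times \mathbb P(\lambda) \subseteq S(\underline 1) \subseteq \tilde X$ satisfies $p \in \langle Q_\lambda \rangle \cong \mathbb P^3_K$. Inside $\mathbb P^3_K$ every point of $Q_\lambda$ lies on a chord of $Q_\lambda$ through $p$, so $Q_\lambda \subseteq \Sigma_p(\tilde X)$; Theorem~\ref{thm:classificationsecantloci} then forces $\Sigma_p(\tilde X) = Q_\lambda$.

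Cases (d) and (e) I would handle by planar and higher-dimensional sectional analyses, respectively. For (d), given $p = \langle s, q\rangle \in B \setminus (A \cup \tilde X)$ with $s \in L_\alpha \subseteq S(\underline 1)$ and $q \in \tilde X$, the plane $\pi = \langle L_\alpha, q\rangle$ meets $\tilde X$ along $L_\alpha$ and a second line $\ell = \langle q, q_y\rangle$ contained in the ruling $\mathbb L(\varphi(q))$, where $q_y = L_\alpha \cap \mathbb L(\varphi(q))$. At $q_y$ the tangent space $T_{q_y}\tilde X$ contains both $L_\alpha$ and $\mathbb L(\varphi(q))$, hence all of $\pi$; so $p \in \pi \subseteq T_{q_y}\tilde X$ and both lines $L_\alpha$, $\ell$ lie in $\Sigma_p(\tilde X)$. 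Together with Theorem~\ref{thm:classificationsecantloci} and case (f) (excluding the alternative $\Sigma_p = Q$ via $p \notin A$), this yields $\Sigma_p = L_\alpha \cup \ell$. For (e), the point $p = \langle a,\delta\rangle \in U \setminus B$ lies in the linear span of the sub-scroll $S(\underline 1, 2_\beta) \subseteq \tilde X$, where $\beta$ is the unique index with $\delta \in \langle C_\beta\rangle$; a dimension count inside the $(2k{+}2)$-dimensional family of smooth conic sections of $S(\underline 1, 2_\beta)$, each spanning a $\mathbb P^2$, produces a smooth conic $C \subseteq \tilde X$ with $p \in \langle C \rangle$, hence $C \subseteq \Sigma_p(\tilde X)$. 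Theorem~\ref{thm:classificationsecantloci} together with (f) then forces $\Sigma_p = C$.

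Case (a) is immediate from the definition of $\Sec(\tilde X)$. For (b), each of the other four nonempty $\Sigma_p$-types from Theorem~\ref{thm:classificationsecantloci} exhibits a tangency point $q$ with $p \in T_q\tilde X$: the two tangent lines from an external point in the plane of a smooth conic, the tangent cone from an external point of a smooth quadric in $\mathbb P^3_K$, the singular point of $L_1 \cup L_2$, and the double point $2q$ itself. So $\Sigma_p = q_1 + q_2$ whenever $p \in \Sec(\tilde X) \setminus \Tan(\tilde X)$; the same observations applied to the structures constructed in (d), (e), (f) furnish the deferred containment $B \cup U \subseteq \Tan(\tilde X)$. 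Finally, (c) follows by elimination: any $p \in \Tan(\tilde X) \setminus (B \cup U)$ has a secant locus not of type $\emptyset, q_1+q_2, L_1 \cup L_2, C$, or $Q$, so $\Sigma_p = 2q$. The main obstacle is the construction in case (e) of a smooth conic $C \subseteq \tilde X$ with $p \in \langle C\rangle$, which requires a careful parametric analysis of the section-family of $S(\underline 1, 2_\beta)$; once this sub-scroll geometry is in hand, the remainder of the argument is essentially a bookkeeping exercise combining Theorem~\ref{thm:classificationsecantloci} with the disjointness of the two covers.
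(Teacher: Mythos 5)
Your global strategy --- verify the auxiliary containments, observe that the right-hand sides of (a)--(f) together with $\tilde X$ partition ${\mathbb P}^{r+1}_K$ just as the strata do, and then prove a single inclusion per case --- is sound in principle, and your forward constructions for (f) and (d) (the rank-two matrix argument producing $Q_\lambda\subseteq\Sigma_p(\tilde X)$, and the plane $\langle L_\alpha,q\rangle$ cutting out $L_\alpha\cup\ell$) are correct and close to the paper's. But the execution is circular at exactly the points where you invoke earlier cases. In (d) and (e) you exclude the alternative $\Sigma_p(\tilde X)=Q$ ``via $p\notin A$''; this uses the implication $\Sigma_p(\tilde X)=Q\Rightarrow p\in A$, i.e.\ the inclusion $SL^Q(\tilde X)\subseteq A$, whereas your one-directional scheme has only established $A\setminus\tilde X\subseteq SL^Q(\tilde X)$. (This particular gap is repairable in two lines: a smooth quadric surface $\Sigma_p(\tilde X)$ has a ruling mapped isomorphically to ${\mathbb P}^1_K$ by $\varphi$, hence consisting of line sections, so $\Sigma_p(\tilde X)\subseteq S(\underline 1)$ and $p\in\langle\Sigma_p(\tilde X)\rangle\subseteq A$ --- the paper's ``$\subseteq$'' argument for (f).)

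The serious gap is case (c). ``By elimination'' you must show that a point $p\in\Tan(\tilde X)\setminus(B\cup U)$ cannot have secant locus of type $L_1\cup L_2$, $C$ or $Q$, and that amounts to the reverse inclusions $SL^{L_1\cup L_2}(\tilde X)\subseteq B$, $SL^C(\tilde X)\subseteq U$ and $SL^Q(\tilde X)\subseteq A$, none of which your scheme proves. The first and third are short, but $SL^C(\tilde X)\subseteq U$ is the technical heart of the paper's proof: one must show that a secant locus which is a smooth plane conic is necessarily a conic \emph{section} of the scroll (via the tangent-space argument at two points of a ruling), that $\Sec_p(\tilde X)\cap A=\emptyset$, and then project from $A$ onto $\langle S(\underline 2)\rangle$ to identify a $C_\beta$ with $p\in\mbox{\rm Join}(A,\langle C_\beta\rangle)\subseteq U$. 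Your proposal contains no trace of this argument, and the disjoint-cover bookkeeping cannot conjure it, since the one inclusion you need for (c) presupposes it. Finally, your forward direction of (e) (producing a smooth conic $C\subseteq\tilde X$ with $p\in\langle C\rangle$ by a dimension count on the family of conic sections of $S(\underline 1,2)_\beta$) is only a sketch and is harder than you suggest; the paper sidesteps it by using the sub-scrolls $S(1,2)$ and $S(1,1,2)$ together with the secant-defect inequality (4.8) to conclude $\dim\Sigma_p(\tilde X)\geq 1$, and then quoting the already-established equalities (a), (b), (d), (f) --- which again requires having proved those as equalities, not as one-sided inclusions.
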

\vspace{0.1 cm}

In order to prove this theorem we will need a preliminary result we mention now:

\begin{lemma}\label{lem:4.1}
Let $\widetilde{X} \subset {\mathbb P}^{r+1} _K$ be a smooth rational normal scroll and let $p$ be a closed point contained in $\mathbb P^{r+1} _K \setminus \widetilde{X}$. \\
$(1)$ If $\mbox{dim} ~\Sigma_p (\widetilde{X}) >0$, then $p \in \mbox{Tan} (\widetilde{X})$.\\
$(2)$ Let $\widetilde{X} = S(\underbrace{1, \ldots , 1}_{n})\subset {\mathbb P}^{2n-1} _K$ with $n \geq 2$. Then $\Sigma_p (\widetilde{X})$ is a smooth quadric surface.\\
$(3)$ Let $\widetilde{X} = S(1,2)\subset {\mathbb P}^4 _K$. Then $\mbox{dim}~\Sigma_p (\widetilde{X})=1$. \\
$(4)$ Let $\widetilde{X} = S(\underbrace{1,\cdots,1}_{n-1},2)\subset {\mathbb P}^{2n} _K$ with $n \geq 2$. Then $\mbox{dim}~\Sigma_p (\widetilde{X}) \geq 1$.
\end{lemma}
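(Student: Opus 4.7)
My plan uses Theorem~\ref{thm:classificationsecantloci} as the backbone for all four parts.

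For part (1), Theorem~\ref{thm:classificationsecantloci} reduces matters to the three positive-dimensional options for $\Sigma_p$: a smooth plane conic, a union of two coplanar lines, or a smooth quadric surface. In each case I would exhibit a point $q \in \Sigma_p$ where $\langle p,q \rangle$ is tangent to $\Sigma_p$ at $q$. For the smooth conic and the smooth quadric surface, the projection from $p$ inside $\Sec_p(\widetilde X) = \mathbb P^{t-1}_K$ restricts to a degree-two morphism on the irreducible quadric $\Sigma_p$ which must ramify since $\dim \Sigma_p > 0$; any ramification point works. For two coplanar lines, the singular intersection point $q$ has $T_q \Sigma_p$ equal to the ambient plane, which contains $p$. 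Since $\Sigma_p$ is scheme-theoretically cut out of $\widetilde X$ by $\Sec_p(\widetilde X)$, we have $T_q \Sigma_p \subseteq T_q \widetilde X$, so $p \in T_q \widetilde X \subseteq \Tan(\widetilde X)$.

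For part (2), identify $\widetilde X = S(\underbrace{1,\ldots,1}_{n})$ with the Segre variety of rank-one tensors in $\mathbb P(K^2 \otimes K^n) = \mathbb P^{2n-1}_K$. Any $p \notin \widetilde X$ has rank two; let $V \subseteq K^n$ be its two-dimensional row-span. The sub-Segre $\mathbb P^1 \times \mathbb P(V)$ is a smooth quadric surface living inside its linear span $\mathbb P(K^2 \otimes V) = \mathbb P^3_K \ni p$, and every line of this $\mathbb P^3_K$ meets the quadric in two points. Hence $\Sec_p(\widetilde X) \supseteq \mathbb P^3_K$ and $\Sigma_p(\widetilde X) \supseteq \mathbb P^1 \times \mathbb P(V)$; Theorem~\ref{thm:classificationsecantloci} then forces equality.

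For part (3), with $\widetilde X = S(1,2) \subset \mathbb P^4_K$ let $L$ be the directrix line and set $\Pi := \langle p,L \rangle$, a plane. Then $\Pi \cap \widetilde X$ is a plane curve of degree $\deg \widetilde X = 3$ containing $L$, hence $\Pi \cap \widetilde X = L \cup C'$ with $C' \subset \Pi$ a residual conic. Every line through $p$ in $\Pi$ cuts $L \cup C'$ in $1+2 = 3$ points of $\widetilde X$, yielding a pencil of trisecants through $p$, so $\dim \Sigma_p \geq 1$. The reverse inequality follows because $\Sigma_p$ cannot be a smooth quadric surface: such a $2$-dimensional subvariety would force the irreducible surface $\widetilde X$ to be a quadric surface, contradicting $\deg \widetilde X = 3$.

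For part (4), the case $n=2$ is part (3). For $n \geq 3$, set $\Lambda := S(\underbrace{1,\ldots,1}_{n-1})$ and $C := S(2) \subset \widetilde X$, so $\langle \Lambda \rangle = \mathbb P^{2n-3}_K$ and $\langle C \rangle = \mathbb P^2_K$ are disjoint and jointly span $\mathbb P^{2n}_K$; decompose $p = \lambda + \gamma$ accordingly. If $\gamma = 0$, part (2) applied to $\Lambda$ gives $\Sigma_p(\Lambda)$ a smooth quadric surface, hence $\dim \Sigma_p(\widetilde X) \geq 2$. If $\lambda = 0$, every line through $p$ in $\langle C \rangle$ is secant to $C$, giving $\Sigma_p(\widetilde X) \supseteq C$. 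Otherwise, combine the two-dimensional family of pairs $(r_0,r_1) \in \Lambda^2$ with $\lambda \in \overline{r_0 r_1}$ (from part (2) when $\lambda \notin \Lambda$, or from lines on $\Lambda$ through $\lambda$ when $\lambda \in \Lambda$) with the lifts $\tilde q_i := r_i + t_i y_i$ along the rulings $\mathbb L(\varphi(r_i))$, where $y_i := C \cap \mathbb L(\varphi(r_i))$; the condition $p \in \overline{\tilde q_1 \tilde q_2}$ then separates, after matching $\langle \Lambda \rangle$- and $\langle C \rangle$-components, into $\lambda \in \overline{r_0 r_1}$ and the chord condition $\gamma \in \overline{y_1 y_2}$. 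The main obstacle is to show that the chord condition carves out a one-dimensional subfamily of pairs, which I expect to follow from the dominance of the induced chord map onto the dual plane of $\langle C \rangle$ (since every line in $\mathbb P^2_K$ is a chord of the smooth conic $C$).
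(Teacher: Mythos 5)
Parts (1) and (2) of your proposal are sound: your argument for (1) is essentially the paper's (which also splits into the irreducible-hyperquadric case and the two-coplanar-lines case, handling the node $q$ via $T_q\widetilde X=\langle \mathbb{L}(x),L'\rangle\supseteq\langle L,L'\rangle$), and your tensor-rank computation for (2) is a correct, more explicit substitute for what the paper does. The genuine problem is part (3). The claim that $\Pi\cap\widetilde X$ is a plane cubic $L\cup C'$ is false: $\Pi=\langle p,L\rangle$ has codimension $2$ in $\mathbb P^4_K$, so once the intersection with the surface $\widetilde X$ contains the line $L$ it is improper, and B\'ezout does not force a residual conic. Concretely, realizing $S(1,2)$ as the rank-one locus of the $2\times 3$ matrix with rows $(x_0,y_0,y_1)$ and $(x_1,y_1,y_2)$, with $L=\{y_0=y_1=y_2=0\}$, take $p=[0:0:1:1:0]\notin\widetilde X$; then the ideal of $\Pi\cap\widetilde X$ on $\Pi=\{[a:b:c:c:0]\}$ is $(c(a-b),\,cb,\,c^2)$, whose saturation is $(c)$, i.e.\ $\Pi\cap\widetilde X=L$ exactly. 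This happens whenever the projection of $p$ from $\langle L\rangle$ misses the conic $S(2)$, which is the generic situation; in that case there is no secant line to $\widetilde X$ through $p$ inside $\Pi$ at all, and in fact $\Sigma_p(\widetilde X)$ is a conic section of the scroll whose plane is \emph{not} $\langle p,L\rangle$. So part (3) is unproven.

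Part (4) is also incomplete: the case $n=2$ defers to the broken part (3), and in the general-position case you yourself concede that the key dimension count (that the chord condition on $\gamma$ cuts a one-dimensional subfamily of the pairs $(r_0,r_1)$) is only expected; moreover a chord of $\widetilde X$ through $p$ does not cleanly decompose into a chord of $\Lambda$ through $\lambda$ and a chord of $C$ through $\gamma$, because the endpoints are of the form $r_i+t_iy_i$ and the scalars $t_i$ must be controlled. The paper disposes of (2), (3) and (4) uniformly with a single tool, the entry-locus inequality $\dim\Sigma_p(\widetilde X)\ \ge\ 2\dim\widetilde X+1-\dim\Sec(\widetilde X)$ quoted from Russo and Shafarevich: since $\Sec(\widetilde X)$ lies in $\mathbb P^{2n-1}_K$, $\mathbb P^{4}_K$, $\mathbb P^{2n}_K$ respectively (and fills the ambient space in each case, so every $p\notin\widetilde X$ qualifies), one gets $\dim\Sigma_p\ge 2,\,1,\,1$ at once, and the matching upper bounds come from Theorem~\ref{thm:classificationsecantloci} (for $S(1,2)$, also from $\dim\widetilde X=2$ and $\deg\widetilde X=3$). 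I suggest you either adopt that argument or repair (3) by exhibiting the correct plane, namely the span of the conic section of $\widetilde X$ through the relevant points, rather than $\langle p,L\rangle$.
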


\begin{proof}
$(1)$: If $\codim (\tilde X) \leq 1$, our claim is obvious. So, let $\codim (\tilde X) \geq 2$. If $\mbox{dim} \Sigma_p (\widetilde{X}) >0$, then $\Sigma_p (\widetilde{X})$
is either an irreducible hyperquadric in the linear space $\mbox{Sec}_p
(\widetilde{X})$ or else the union of a line $L$ which is contained in a
ruling $\mathbb{L} (x) \subset \widetilde{X}$ and a line section $L'$ of $\widetilde{X}$ by Theorem \ref{thm:classificationsecantloci}. In the first case, clearly $p \in \mbox{Tan}(\Sigma_p (\widetilde{X}))
\subset \mbox{Tan}(\widetilde{X})$. In the latter case, let $L \cap L' = \{ q \}$.
Therefore $T_q \widetilde{X} = \langle \mathbb{L}(x), L'\rangle $ contains
the plane $\mbox{Sec}_p (\widetilde{X})=\langle L,L'\rangle $. In particular,
$p \in T_q \widetilde{X} \subset \mbox{Tan}(\widetilde{X})$.

\noindent $(2), (3),(4)$: If $n=2$, statement $(2)$ is clear. So we may assume that $\codim (\tilde X) \geq 2$. Remember that $\dim \Sigma _p(\tilde X) \leq 2$ by Theorem \ref{thm:classificationsecantloci}. The statements will be proved by applying the following inequalities (cf. \cite[I.(1.3.3)]{Ru} and \cite[I.6.3. Theorem 7]{Sha}):
\renewcommand{\descriptionlabel}[1]%
             {\hspace{\labelsep}\textrm{#1}}
\begin{description}
\setlength{\labelwidth}{13mm}
\setlength{\labelsep}{1.5mm}
\setlength{\itemindent}{0mm}

\smallskip

\item[(4.8)  ] $(2n+1) - \mbox{dim}~ \mbox{Sec}(\widetilde{X}) \leq \mbox{dim}~ \Sigma_p (\widetilde{X})$
\end{description}
When $\widetilde{X} = S(1,\cdots,1)$, (4.8) implies that $\mbox{dim}~ \Sigma_p (\widetilde{X}) \geq 2$ since $\mbox{Sec}(\widetilde{X})$ is a subset of ${\mathbb P}^{2n-1} _K$. Therefore $\Sigma_p (\widetilde{X})$ is a smooth quadric surface by Theorem \ref{thm:classificationsecantloci}. When $\widetilde{X} = S(1,2)$, obviously $\dim \Sigma _p(\tilde X) \leq 1$ while (4.8) implies that $\dim \Sigma _p(\tilde X) \geq 1$. Thus we get $\mbox{dim} ~\Sigma_p (\widetilde{X}) = 1$. Similarly, for $\widetilde{X} = S( 1,\cdots,1 ,2)$, (4.8) enables us to conclude that $\mbox{dim}~\Sigma_p (\widetilde{X}) \geq 1$.
\end{proof}
\vspace{0.1 cm}

Now we give the \\

\noindent {\bf Proof of Theorem \ref{4.2 Theorem}.} The inclusions $\tilde X \subseteq B$ and $A \subseteq
U$ are obvious. As $A = \mbox{Sec}(S(\underline 1))$ we get $A  \subseteq \mbox{Join}(S(\underline 1), \tilde X) = B$, hence $A \subseteq B \cap U$.

To prove the inclusions $B \subseteq \mbox{Tan}(\tilde X)$ and $U \subseteq \mbox{Tan}(\tilde X)$ we write
\begin{equation*}
S(\underline 1) = \bigcup _{\alpha \in {\mathbb P} _K^{k - 1}} L_{\alpha} \quad \mbox{and} \quad  \bigtriangleup = \bigcup_{\beta \in {\mathbb P}^{m - k - 1}_K} \langle C_{\beta}\rangle
\end{equation*}
(cf. Definition and Remark \ref{4.1 Definition and Remark}.(B)). Now, the first inclusion
follows from the equalities

\begin{enumerate}
\item[(4.9)] $\quad B \quad = \quad \mbox{Join}(S(\underline{1}),\widetilde{X})$ \par
$\hskip 0.9 cm = \quad \bigcup_{\alpha \in {\mathbb P}^{k - 1}_K} \mbox{Join}(L_{\alpha},\widetilde{X})$ \par
$\hskip 0.9 cm = \quad \bigcup_{L_{\alpha \in {\mathbb P}^{k-1}_K}} \mbox{Join}(L_{\alpha},\bigcup_{x \in {\mathbb P}^1_K} \mathbb{L}(x))$ \par
$\hskip 0.9 cm = \quad \bigcup_{\alpha \in {\mathbb P}^{k - 1}_K,~ x \in {\mathbb P}^1_K} \langle L_{\alpha},\mathbb{L}(x)\rangle $.
\end{enumerate}

\noindent To prove the second inclusion we first observe that

\begin{enumerate}
\item[(4.10)] $\quad U \quad = \quad \mbox{Join}(A,\bigtriangleup)$ \par
$\hskip 0.9 cm = \quad \mbox{Join}(A,\bigcup_{\beta \in {\mathbb P}^{m - k - 1}_K} \langle C_{\beta}\rangle ) $ \par
$\hskip 0.9 cm = \quad \bigcup _{\beta \in {\mathbb P}^{m - k - 1}_K} \langle A,\langle C_{\beta}\rangle \rangle .$
\end{enumerate}

\noindent For each $\beta \in {\mathbb P}^{m - k - 1}_K$ consider the smooth
rational normal scroll
\begin{equation*}
S(\underline 1,2)_{\beta} := \langle A,\langle C_{\beta}\rangle \rangle  \cap \widetilde{X} \subset \langle A,\langle C_{\beta}\rangle \rangle .
\end{equation*}
Since $S(\underline 1,2)_{\beta}$ spans the linear space $\langle A,\langle C_{\beta}\rangle \rangle $, Lemma \ref{lem:4.1}.$(4)$ implies that
\begin{equation*}
\langle A,\langle C_{\beta}\rangle \rangle  = \mbox{Tan} ~ S(\underline 1,2)_{\beta}  \subset \mbox{Tan} (\widetilde{X}).
\end{equation*}
This gives the desired inclusion $U \subseteq \mbox{Tan}(\tilde X)$.
\vspace{0.1 cm}

We now prove statements (a), (b), (f), (d), (e) and (c).
\vspace{0.1 cm}

(a): This follows by Notation and Remarks \ref{2.3 Notation and Remarks}.(C).
\vspace{0.1 cm}

(b): ``$\subseteq $'': Let $p \in SL^{q_1,q_2} (\widetilde{X})$.
Then $\Sigma_p (\widetilde{X}) = \{q_1,q_2\}$ and $L:=\langle q_1,q_2\rangle $
is a secant line to $\widetilde{X}$ and hence $p \in \mbox{Sec}(\widetilde{X})$.
Now assume that $p \in \mbox{Tan}(\widetilde{X})$. Then there exists $q \in \tilde X$
such that $p \in T_q \widetilde{X}$. Therefore $q \in \Sigma_p (\widetilde{X})$.
This implies that $q = q_1$ or $q=q_2$. In particular, $L$ is a tri-secant line to $\widetilde{X}$ and so we get the contradiction
that $p \in L \subseteq \widetilde{X}$. Therefore $p \in \mbox{Sec}(\widetilde{X}) \setminus \mbox{Tan}(\widetilde{X})$.
\vspace{0.1 cm}

$``\supseteq $'': Let $p \in \mbox{Sec}(\widetilde{X}) \setminus \mbox{Tan}
(\widetilde{X})$. By statement $(a)$
we have $\Sigma_p (\widetilde{X}) \neq \emptyset$. By Lemma \ref{lem:4.1}.$(1)$ we have $\mbox{dim} \Sigma_p (\widetilde{X}) \leq 0$.
Thus $\Sigma_p (\widetilde{X})$ has dimension zero. Now
Theorem \ref{thm:classificationsecantloci}.$(a)$ and  $p \notin \mbox{Tan}
(\widetilde{X})$ guarantee that $\Sigma_p
(\widetilde{X})$ is the union of two simple points.
\vspace{0.1 cm}

(f): ``$\subseteq $'': Let $p \in SL^Q (\widetilde{X})$ so that $\mbox{Sec}_p (\widetilde{X}) = {\mathbb P}^3_K$ and $\Sigma_p (\widetilde{X})
\subset {\mathbb P}^3_K$ is a smooth quadric surface. Remember that $\Sigma_p (\widetilde{X})$ contains two disjoint families of lines $\{L_{\lambda}\}$, $\{ M_{\lambda}\}$, each parameterized by $\lambda \in
{\mathbb P}^1_K$. Also since $p \notin \widetilde{X}$, the restriction map
$\varphi |_{\Sigma_p (\widetilde{X})} : \Sigma_p (\widetilde{X}) \rightarrow
{\mathbb P}^1_K$ is surjective. Therefore one of the two families, say $\{L_{\lambda}\}$,
consists of line sections of $\widetilde{X}$. Thus $\Sigma_p (\widetilde{X})
\subset S(\underline{1})$ and hence $p \in \mbox{Sec}_p (\widetilde{X}) = \ \langle \Sigma_p (\widetilde{X})\rangle  \ \subset
A \ = \ \langle S(\underline{1})\rangle $.
\vspace{0.1 cm}

``$\supseteq $'': Let $p \in A \setminus \widetilde{X}$. Then $k >1$ (s. Definition and Remark \ref{4.1 Definition and Remark}.(B)). Therefore Lemma \ref{lem:4.1}.$(2)$ shows that $\Sigma_p (S(\underline{1}))$ is a smooth quadric surface.
Since $\Sigma_p (S(\underline{1})) \subset \Sigma_p (\widetilde{X})$
this implies that $\mbox{dim}~\Sigma_p (\widetilde{X}) \geq 2$. Now by Theorem \ref{thm:classificationsecantloci} we see that $\Sigma_p (\widetilde{X})=\Sigma_p (S(\underline{1}))$ is a smooth quadric surface and hence $p \in SL^Q (\widetilde{X})$.
\vspace{0.1 cm}

(d): ``$\subseteq $'': Let $p \in SL^{L_1 \cup L_2} (\widetilde{X})$ so that $\mbox{Sec}_p (\widetilde{X}) = {\mathbb P}^2_K$ and $\Sigma_p (\widetilde{X}) =L_1 \cup L_2$ where $L_i$ are lines such that $L_1$ is a line section of $\widetilde{X}$. Thus $L_1 \subset S(\underline{1})$ and $L_2 \subset {\mathbb L}(x)$ for some $x \in {\mathbb P}^1_K$. This shows that $p \in \langle L_1,L_2\rangle =\mbox{Sec}_p (\widetilde{X}) \subset \mbox{Join} (S(\underline{1}),\widetilde{X})=B$. On the other hand statement (f) implies $p \notin A$. Therefore, $p \in B \setminus (A \cup \widetilde{X})$.
\vspace{0.1 cm}

``$\supseteq $'': Let $p \in B \setminus (A \cup \widetilde{X})$. By (4.8),
there exists a line $L_{\alpha } \subset S(\underline{1})$ and a point
$x \in {\mathbb P}^1_K$ such that $p \in \langle L_{\alpha},\mathbb{L}(x)\rangle  = {\mathbb P}^n_K$. Let $L$ denote the line $\langle p,L_{\alpha}\rangle  \cap \mathbb{L}(x)$. Then clearly $L_{\alpha} \cup L \subset \Sigma_p (\widetilde{X})$. On the other hand, Theorem \ref{thm:classificationsecantloci} and statement (f) show that $\Sigma_p (\widetilde{X})$ has dimension at most one so that $\Sigma_p (\widetilde{X}) = L_{\alpha} \cup L$ and hence $p \in SL^{L_1 \cup L_2} (\widetilde{X})$.
\vspace{0.1 cm}

(e): ``$\subseteq $'': Let $p \in SL^C (\widetilde{X})$. This means that $\mbox{Sec}_p (\widetilde{X}) = {\mathbb P}^2_K$ and $\Sigma_p (\widetilde{X}) \subset {\mathbb P}^2_K$ is a smooth plane conic curve. Clearly $\varphi |_{\Sigma_p (\widetilde{X})} : \Sigma_p (\widetilde{X}) \rightarrow  {\mathbb P}^1_K$ is a surjective map. We will show that indeed $\Sigma_p (\widetilde{X})$ is a conic section of $\tilde X$, or equivalently, that $\varphi |_{\Sigma_p (\widetilde{X})}$ is bijective. Suppose to the contrary
that there is a point
$x \in  {\mathbb P}^1_K$ such that $\mathbb{L}(x)$ meets $\Sigma_p (\widetilde{X})$ in two distinct points $q_1$ and $q_2$. Then the two lines $T_{q_1} \Sigma_p (\widetilde{X})$ and $T_{q_2} \Sigma_p (\widetilde{X})$ meet at a point $z \in \mbox{Sec}_p (\widetilde{X})
\backslash \tilde X$. Thus we have
\begin{equation*}
T_{q_1} (\widetilde{X}) = \langle  \mathbb{L}(x),z\rangle  = T_{q_2} (\widetilde{X}),
\end{equation*}
which is impossible by Notation and Remarks \ref{3.1 Notation and Remarks}.(B). Therefore,
$\Sigma_p (\widetilde{X})$ is a conic section of $\tilde X$. In particular
$\Sigma_p (\widetilde{X}) \subset S(\underline{1},\underline{2})$.

Next we claim that $\mbox{Sec}_p (\widetilde{X}) \cap A = \emptyset$. Suppose
to the contrary that there is a closed point $z$ in $\mbox{Sec}_p (\widetilde{X}) \cap A$. If $z \notin S(\underline{1})$, then $k>1$ and so $\mbox{Sec}_z (\widetilde{X})$ contains the $3$-dimensional space $\mbox{Sec}_z (S (\underline{1}))$ (s. Lemma \ref{lem:4.1}.$(2)$) and the point $p$.
This implies that $\mbox{Sec}_z (\widetilde{X})$ has dimension at least $4$, a
contradiction to Theorem~\ref{thm:classificationsecantloci}.
If $z \in S(\underline{1})$, then let $L_z$ be the unique line section of $\widetilde{X}$ which passes through $z$. Note that $z \in L_z \cap \Sigma_p (\widetilde{X})$ since otherwise $\langle p,z \rangle$ is a proper tri-secant line to $\widetilde{X}$, which is not possible since $\widetilde{X}$ is cut out by quadrics. As $\Sigma_p (\tilde X)$ is a smooth plane conic curve, we have $L_z \nsubseteq \mbox{Sec}_p (\tilde X) = \langle \Sigma_p (\tilde X) \rangle$, since otherwise $\tilde X$ would have tri-secant lines.
Observe that the $3$-dimensional linear space $\langle \mbox{Sec}_p (\widetilde{X}) , L_z \rangle$ contains both $p$ and the surface
\begin{equation*}
S := \overline{\bigcup_{x \in \mathbb P^1 _K \setminus \{ \varphi (z) \}} \langle \mathbb L (x) \cap \Sigma_p (\widetilde{X}) , \mathbb L (x) \cap L_z \rangle} ~ \subset ~ \tilde X.
\end{equation*}
As the conic $\Sigma_p (\tilde X)$ is contained in $S$, $S$ cannot be a plane. So the generic line in $\langle\mbox{Sec}_p (\tilde X), L_z \rangle$ passing through $p$ is a secant line to $S$, whence to $\tilde X$. It follows that $\Sigma_p (\widetilde{X})$ contains $S$, which contradicts the fact that $\dim ~ \Sigma_p (\tilde X) =1$. This completes the proof that $\mbox{Sec}_p (\widetilde{X}) \cap A = \emptyset$.

Now consider the canonical projection map
   \[ \pi : \langle S(\underline{1},\underline{2})\rangle
      \setminus A \rightarrow \langle S(\underline{2})\rangle
   \]
which fixes $\langle S(\underline 2)
\rangle $. The image $\pi (\Sigma _p(\tilde X))$ is contained in $S(\underline{2})$. Also it is again a smooth plane conic as $\mbox{Sec}_p(\tilde X) \cap A = \emptyset $.  Moreover, for all $x \in {\mathbb P}^1_K$
we have $\pi ({\mathbb L}(x) \cap (\langle S(\underline 1, \underline 2) \rangle
\backslash A)) \subseteq {\mathbb L}(x)$ so that $\sharp (\pi (\Sigma _p
(\tilde X)) \cap {\mathbb L}(x)) \leq 1$ for all such $x$. Therefore $\pi
(\Sigma _p(\tilde X))$ is a conic section of $\tilde X$
and hence is equal to $C_{\beta}$ for some $\beta \in \mathbb P^{m-k-1} _K$. Therefore $\pi (\mbox{Sec}_p
(\widetilde{X})) = \langle C_{\beta}\rangle $ which guarantees that
\begin{equation*}
p \in \mbox{Sec}_p (\widetilde{X}) \subset  \mbox{Join} (A,\langle C_{\beta}\rangle ) \subset U.
\end{equation*}
On the other hand, $p \notin (B \setminus (A \cup \widetilde{X})) \cup (A \setminus \widetilde{X}) \cup \widetilde{X} = B$ by statements (d)
and (f). This completes the proof that $p \in U \setminus B$.
\vspace{0.1 cm}

``$\supseteq $'': Let $p \in U \setminus B$. By statements (a), (b), (d), (f) and
Theorem~\ref{thm:classificationsecantloci} we have
\begin{equation*}
p \in SL^{2q} (\widetilde{X}) \dot{\cup} SL^C (\widetilde{X}).
\end{equation*}
Thus it suffices to show that $\mbox{dim}~\Sigma_p (\widetilde{X}) \geq 1$.
By $(4.9)$, there exists a point
$\beta \in {\mathbb P}^{m - k - 1}_K$
such that $p \in \langle A,\langle C_{\beta}\rangle \rangle  \setminus A$.
If $p \in \langle C_{\beta}\rangle $, then $C_{\beta} \subset \Sigma_p
(\widetilde{X})$ and hence $\mbox{dim}~\Sigma_p (\widetilde{X}) \geq 1$.
Now assume that $p \notin \langle C_{\beta}\rangle$, so that $A \neq \emptyset$, and consider
the canonical projection map (which fixes $A$)
\begin{equation*}
\varrho : \langle A,\langle C_{\beta}\rangle \rangle  \setminus \langle C_{\beta}\rangle  \rightarrow A.
\end{equation*}
Let $q = \varrho (p)$. If $q \in S(\underline{1})$, let $L$ be the unique line section of $\widetilde{X}$ which passes through $q$. Then
\begin{equation*}
p \in \mbox{Join} (L,\langle C_{\beta}\rangle )= \mathbb P^4_K.
\end{equation*}
Moreover, $\mathbb P^4_K$ contains the smooth rational normal surface scroll
\begin{equation*}
S(1,2) = \bigcup_{v \in L, ~ w \in C_{\beta},~ \varphi (v)=
\varphi (w)} \langle  v,w\rangle  \quad \subset \quad  \widetilde{X}.
\end{equation*}
Since $\Sigma_p (S(1,2)) \subset \Sigma_p (\widetilde{X})$, Lemma \ref{lem:4.1}.$(3)$ shows that $\mbox{dim}~\Sigma_p (\widetilde{X}) \geq 1$. If $q \in A \setminus S(\underline{1})$, we have $k>1$ and Lemma \ref{lem:4.1}.$(2)$ implies that $\Sigma_q (S(\underline{1}))$ is a smooth quadric
surface. Moreover
\begin{equation*}
p \in \mbox{Join} (\langle \Sigma_q (S(\underline{1}))\rangle , \langle C_{\beta} \rangle )=
\mathbb P^6_K.
\end{equation*}
For each point $x \in \mathbb P^1_K$
consider the line $L_x =  \Sigma_q (S(\underline{1})) \cap \mathbb{L}(x)$. Now
$\mathbb P^6_K$ contains the threefold rational normal scroll
\begin{equation*}
S(1,1,2) = \bigcup_{x \in \mathbb P^1 _K, ~w \in C_{\beta}, \varphi (w)=x}
\langle L_x , w\rangle  \subset \widetilde{X}.
\end{equation*}
As $\Sigma_p (S(1,1,2)) \subset \Sigma_p (\widetilde{X})$, it remains to show that $\mbox{dim}~\Sigma_p (S(1,1,2)) \geq 1$. This follows by Lemma \ref{lem:4.1}.$(4)$.
\vspace{0.1 cm}

(c): It is easy to see that ${\mathbb P}^{r + 1}_K \backslash \tilde X$
is the disjoint union of the sets ${\mathbb P}^{r + 1}_K
\backslash \mbox{Sec}(\tilde X)$, $\mbox{Sec}(\tilde X) \backslash
\mbox{Tan}(\tilde X), \mbox{Tan}(\tilde X) \backslash (B \cup U),
B \backslash (A \cup \tilde X), U \backslash B$ and $A \backslash
\tilde X$. Now, statements (a), (b), (d), (e), (f) and the equality
(4.7) imply that $SL^{2q}(\tilde X) = \mbox{Tan}(\tilde X) \backslash
(B \cup U)$. \qed
\vspace{0.3 cm}

\section{The Secant Stratification in the General Case} \label{5. The Secant Stratification in the General Case}

We now treat the secant stratification in the general case, that is in
the case where the scroll $\tilde X$ is not necessarily smooth. We
keep all the previous hypotheses and notations.
\vspace{0.1 cm}

We first appropriately generalize the concepts defined in
Definition and Remark~\ref{4.1 Definition and Remark}.

\begin{definition and remark}
\label{5.1 Definition and Remark} (A) Let $\tilde X \subseteq {\mathbb P}^{r + 1}_K$ be a rational normal scroll of codimension at least $2$ and with vertex $\mbox{Vert}(\tilde X) =
{\mathbb P}^h_K$ for some $h \geq 0$. As in Notation and Remark \ref{3.2 Notation and Remarks}, let $\mbox{dim}~ \tilde X = n+h+1$ and note that $\widetilde{X}$ is a cone over an $n$-fold rational normal scroll $\tilde X_0$ in
$\langle \tilde X_0 \rangle = {\mathbb P}^{r - h}_K$. Let $(a_1 , \ldots , a_{n})$ be the type of $\tilde X_0$. Let the integers $k$ and $m$ and the subvarieties $S(\underline{1})$, $S(\underline{2})$ and $\bigtriangleup$ of ${\mathbb P}^{r - h}_K$ be as in Definition and Remark~\ref{4.1 Definition and Remark}. Again we consider the projection map (cf. Notation and Remark \ref{3.2 Notation and Remarks})
\begin{equation*}
\psi : {\mathbb P}^{r + 1}_K \setminus \mbox{Vert}(\tilde X) \twoheadrightarrow \langle \tilde X_0 \rangle = \mathbb P ^{r-h}_K
\end{equation*}
and write $\overline{p} := \psi (p)$ for a closed point $p \in {\mathbb P}^{r + 1}_K \backslash \mbox{Vert}(\tilde X)$.
\vspace{0.1 cm}

\noindent (B) We define the
following sets in ${\mathbb P}^{r + 1}_K \backslash \tilde X$:
\vspace{0.1 cm}

\renewcommand{\descriptionlabel}[1]%
             {\hspace{\labelsep}\textrm{#1}}
\begin{description}
\setlength{\labelwidth}{13mm}
\setlength{\labelsep}{1.5mm}
\setlength{\itemindent}{0mm}

\item[(5.1)] $SL^\emptyset (\tilde X):= \{ p \in {\mathbb P}^{r + 1}
_K \backslash \tilde X \big\arrowvert \Sigma _p(\tilde X) = 2\mbox{\rm
Vert} (\tilde X) \subseteq \langle \mbox{\rm Vert}(\tilde X), p \rangle
\} $;
\vspace{0.1 cm}

\item[(5.2)] $SL^{q_1, q_2} (\tilde X):= \{ p \in {\mathbb P}^{r + 1}
_K \backslash \tilde X \big\arrowvert \Sigma _p(\tilde X) = \mbox{\rm
Join}(\mbox{\rm Vert} (\tilde X), \{ x,y\} )$ where $x, y$ are two
distinct points in some line ${\mathbb P}^1_K \subseteq \langle \tilde
X_0 \rangle \} $;
\vspace{0.1 cm}

\item[(5.3)] $SL^{q} (\tilde X):= \{ p \in {\mathbb P}^{r + 1}
_K \backslash \tilde X \big\arrowvert \Sigma _p(\tilde X) = 2\mbox{\rm
Join}(\mbox{\rm Vert} (\tilde X),x) \subseteq $
$\mbox{\rm Join}
(\mbox{\rm Vert}(\tilde X), L)$, where $L \subseteq \langle \tilde X_0
\rangle $ is a line and $x \in L\} $;
\vspace{0.1 cm}

\item[(5.4)] $SL^{L_1 \cup L_2} (\tilde X):= \{ p \in {\mathbb P}^{r + 1}
_K \backslash \tilde X \big\arrowvert \Sigma _p(\tilde X) = \mbox{\rm
Join}(\mbox{\rm Vert} (\tilde X),L \cup L')$ where $L, L' \subseteq
\langle \tilde X_0 \rangle $ are two distinct coplanar simple lines $\} $;
\vspace{0.1 cm}

\item[(5.5)] $SL^{C} (\tilde X):= \{ p \in {\mathbb P}^{r + 1}
_K \backslash \tilde X \big\arrowvert \Sigma _p(\tilde X) = \mbox{\rm
Join}(\mbox{\rm Vert} (\tilde X),V)$ where $V \subseteq
\langle \tilde X_0 \rangle $ is a smooth plane conic $\} $;
\vspace{0.1 cm}

\item[(5.6)] $SL^{Q} (\tilde X):= \{ p \in {\mathbb P}^{r + 1}
_K \backslash \tilde X \big\arrowvert \Sigma _p(\tilde X) = \mbox{\rm
Join}(\mbox{\rm Vert} (\tilde X),W)$ where $W \subseteq
\langle \tilde X_0 \rangle $ is a smooth quadric surface in a $3$-space
$\} $.
\end{description}
Clearly, if $\tilde X$ is smooth, the strata defined in (5.1) - (5.6)
respectively coincide with the corresponding strata defined in (4.1) -
(4.6). According to Theorem~\ref{3.3 theorem} we have
\vspace{0.1 cm}

\renewcommand{\descriptionlabel}[1]%
             {\hspace{\labelsep}\textrm{#1}}
\begin{description}
\setlength{\labelwidth}{13mm}
\setlength{\labelsep}{1.5mm}
\setlength{\itemindent}{0mm}

\item[(5.7) ] ${\mathbb P}^{r + 1}_K = $
\begin{equation*}
\quad \quad \quad \tilde X \dot \cup SL^\emptyset (\tilde X) \dot \cup SL^{q_1, q_2}(\tilde X) \dot \cup SL^{2q}
(\tilde X) \dot \cup SL^{L_1 \cup L_2}(\tilde X) \dot \cup SL^C(\tilde X) \dot \cup SL^Q(\tilde X).
\end{equation*}
\end{description}
This decomposition will be called \textit{the secant stratification of} $X \subset \mathbb P^{r+1} _K$. \qed
\end{definition and remark}
\vspace{0.1 cm}

\begin{lemma}
\label{5.2 Lemma} Let $p \in {\mathbb P}^{r + 1}_K \backslash \tilde X$
and let $\overline p \in \langle \tilde X_0 \rangle $ be defined
according to Definition and Remark \ref{5.1 Definition and Remark}.(A). Then
\begin{equation*}
p \in SL^\ast (\tilde X) \Longleftrightarrow \overline
p \in SL^\ast (\langle \tilde X_0 \rangle )
\end{equation*}
where $\ast $ runs through the set of symbols $\{ \emptyset , (q_1, q_2), 2q, L_1 \cup L_2, C, Q\}$.
\end{lemma}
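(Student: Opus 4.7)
The plan is to derive Lemma \ref{5.2 Lemma} as a direct case-by-case reading of Theorem \ref{3.3 theorem}, matching the six possible shapes of $\Sigma_p(\tilde X)$ against the six possible shapes of $\Sigma_{\overline p}(\tilde X_0)$. Since $p \notin \tilde X \supseteq \mbox{Vert}(\tilde X)$, the point $\overline p = \psi(p)$ is defined; and if $\overline p \in \tilde X_0$, then $p \in \psi^{-1}(\tilde X_0) = \tilde X$, contradiction, so $\overline p \in \langle \tilde X_0 \rangle \setminus \tilde X_0$. Thus $\overline p$ lies in the common domain of the six strata of the smooth scroll $\tilde X_0$ described in Definition and Remark \ref{4.1 Definition and Remark}.

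Next I would set up the correspondence. Theorem \ref{3.3 theorem} asserts that $\Sigma_p(\tilde X)$ takes precisely one of the six forms $\mbox{Join}(\mbox{Vert}(\tilde X), Z)$, where $Z \subseteq \langle \tilde X_0 \rangle$ runs through: $\emptyset$ (case (a), giving $2\mbox{Vert}(\tilde X)$ as a non-reduced scheme in $\langle \mbox{Vert}(\tilde X), p\rangle$); two simple points on a line (case (b)(i)); one simple point giving a double structure (case (b)(ii)); a smooth plane conic or a pair of coplanar lines $L \cup L'$ as in statement (c); or a smooth quadric surface in a $3$-space (case (d)). Each of these six $Z$'s coincides in shape with exactly one of the six possibilities for $\Sigma_{\overline p}(\tilde X_0)$ classified in Theorem \ref{thm:classificationsecantloci} (interpreting $Z=\emptyset$ as the empty secant locus). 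Comparing these six shapes term-by-term with the definitions (5.1)-(5.6) immediately produces the six implications
\begin{equation*}
\overline p \in SL^\ast(\tilde X_0) \ \Longrightarrow\ p \in SL^\ast(\tilde X),
\end{equation*}
for $\ast \in \{\emptyset,(q_1,q_2),2q,L_1\cup L_2,C,Q\}$.

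For the reverse direction I invoke the two partition identities (5.7) and (4.7), which express $\mathbb P^{r+1}_K \setminus \tilde X$ and $\langle \tilde X_0 \rangle \setminus \tilde X_0$ as disjoint unions of their six strata. Because the six forward implications have already been established and the strata are pairwise disjoint on each side, the converse implications follow by formal bookkeeping: $p$ belongs to exactly one $SL^\ast(\tilde X)$, $\overline p$ belongs to exactly one $SL^\ast(\tilde X_0)$, and the labels must match.

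The only step requiring a little care is checking that the scheme structures (not merely the supports) line up in cases (a) and (b)(ii), where $\Sigma_p(\tilde X)$ carries a non-reduced double structure. Here I rely on the fact, recalled in (2.2), that $\Sigma_p(\tilde X)$ is a hyperquadric in the linear space $\mbox{Sec}_p(\tilde X)$ (and similarly for $\tilde X_0$), so its scheme structure is completely determined by its support together with the ambient linear span; the join construction with $\mbox{Vert}(\tilde X)$ preserves this hyperquadric shape compatibly. No serious obstacle is anticipated, as the lemma is essentially a dictionary lookup between Theorems \ref{thm:classificationsecantloci} and \ref{3.3 theorem}.
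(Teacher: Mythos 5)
Your proposal is correct and follows the same route as the paper, whose entire proof is ``Clear from Theorem \ref{3.3 theorem}'': the lemma is exactly the dictionary between the six cases of Theorem \ref{3.3 theorem} (which expresses $\mbox{Sec}_p(\tilde X)$ and $\Sigma_p(\tilde X)$ as joins of $\mbox{Vert}(\tilde X)$ with $\mbox{Sec}_{\overline p}(\tilde X_0)$ and $\Sigma_{\overline p}(\tilde X_0)$) and the definitions (5.1)--(5.6), with the converse supplied by the disjointness of the two stratifications. Your extra remark on the non-reduced cases is a reasonable precaution but is already built into the statement of Theorem \ref{3.3 theorem} itself, so nothing further is needed.
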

\vspace{0.1 cm}

\begin{proof}
Clear from Theorem~\ref{3.3 theorem}.
\end{proof}
\vspace{0.1 cm}

\begin{theorem}\label{5.3 Theorem}
Let $\tilde X \subseteq {\mathbb P}^{r + 1}_K$ be a rational normal scroll of codimension $\geq 2$. In the notations of Definition and Remark \ref{5.1 Definition and Remark}, let
 \begin{align*} &V:= \mbox{\rm Join}(\mbox{\rm Vert} (\widetilde{X}), \mbox{\rm Tan}(\tilde X_0));\\
      &W:= \mbox{\rm Join}(\mbox{\rm Vert} (\widetilde{X}), \mbox{\rm Sec} (\tilde X_0));\\
      &A:= \langle \mbox{\rm Vert} (\widetilde{X}), \langle S(\underline 1) \rangle \rangle; \\
      &B := \mbox{\rm Join}(\mbox{\rm Vert} (\widetilde{X}), \mbox{\rm Join}(S(\underline 1), \tilde X_0));\\
	  &U:= \mbox{\rm Join}(A, \bigtriangleup).
   \end{align*}
Then $\tilde X \subseteq B, A \subseteq B, B \cup U \subseteq V \subseteq W$ and
\vspace{0.1 cm}

\renewcommand{\descriptionlabel}[1]%
             {\hspace{\labelsep}\textrm{#1}}
\begin{description}
\setlength{\labelwidth}{13mm}
\setlength{\labelsep}{1.5mm}
\setlength{\itemindent}{0mm}

\item[\rm (a)] $SL^\emptyset (\tilde X) = {\mathbb P}^{r + 1}_K \backslash W$;
\vspace{0.1 cm}

\item[\rm (b)] $SL^{q_1, q_2} (\tilde X) = W \backslash V$;
\vspace{0.1 cm}

\item[\rm (c)] $SL^{2q} (\tilde X) = V \backslash (B \cup U)$;
\vspace{0.1 cm}

\item[\rm (d)] $SL^{L_1 \cup L_2} (\tilde X) = B \backslash (A \cup \tilde X)$;
\vspace{0.1 cm}

\item[\rm (e)] $SL^{C} (\tilde X) = U \backslash B$;
\vspace{0.1 cm}

\item[\rm (f)] $SL^{Q} (\tilde X) = A \backslash \tilde X$.
\end{description}
\end{theorem}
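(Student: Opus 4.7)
The plan is to reduce Theorem \ref{5.3 Theorem} to the already established Theorem \ref{4.2 Theorem} by means of Lemma \ref{5.2 Lemma}. When $h = -1$, i.e. $\mbox{Vert}(\tilde X) = \emptyset$, the scroll $\tilde X$ coincides with $\tilde X_0$, every \mbox{Join} with $\mbox{Vert}(\tilde X)$ is trivial, and every statement of Theorem \ref{5.3 Theorem} collapses literally to the corresponding statement of Theorem \ref{4.2 Theorem} (note in particular that $V = \mbox{Tan}(\tilde X)$ and $W = \mbox{Sec}(\tilde X)$ in that case). So assume $h \geq 0$ throughout what follows.

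First, I would verify the stated inclusions. The smooth-case inclusions $A_0 := \langle S(\underline 1)\rangle \subseteq \mbox{Join}(S(\underline 1), \tilde X_0) =: B_0$ and $\tilde X_0 \subseteq B_0$ from Theorem \ref{4.2 Theorem}, after applying $\mbox{Join}(\mbox{Vert}(\tilde X), -)$ and using associativity of joins, yield $A \subseteq B$ and $\tilde X \subseteq B$. Similarly, $B_0 \cup U_0 \subseteq \mbox{Tan}(\tilde X_0)$ (Theorem \ref{4.2 Theorem}) joined with $\mbox{Vert}(\tilde X)$ gives $B \cup U \subseteq V$, since $V = \mbox{Join}(\mbox{Vert}(\tilde X), \mbox{Tan}(\tilde X_0))$. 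The inclusion $V \subseteq W$ is immediate from $\mbox{Tan}(\tilde X_0) \subseteq \mbox{Sec}(\tilde X_0)$.

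For the six equalities (a)--(f), I would use Lemma \ref{5.2 Lemma}: for $p \in \mathbb P^{r+1}_K \setminus \tilde X$ one has $p \in SL^\ast(\tilde X)$ if and only if $\bar p \in SL^\ast(\tilde X_0)$. Applying Theorem \ref{4.2 Theorem} to the smooth scroll $\tilde X_0 \subseteq \langle \tilde X_0 \rangle$ identifies each $SL^\ast(\tilde X_0)$ as a set-theoretic difference of two closed subsets of $\langle \tilde X_0 \rangle$. The translation back is then carried out via the projection $\psi$ using the elementary identity
\[
\{\, p \in \mathbb P^{r+1}_K \setminus \mbox{Vert}(\tilde X) \,:\, \bar p \in T \,\} \ =\  \mbox{Join}(\mbox{Vert}(\tilde X), T) \setminus \mbox{Vert}(\tilde X)
\]
valid for any $T \subseteq \langle \tilde X_0 \rangle$. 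Since $\mbox{Vert}(\tilde X) \subseteq \tilde X$, working modulo the vertex is harmless, and joining the smooth-case closed sets with $\mbox{Vert}(\tilde X)$ produces exactly the sets $W$, $V$, $B$, $A$, $U$ and $\tilde X$ listed in the statement (using $\mbox{Join}(\mbox{Vert}(\tilde X), \tilde X_0) = \tilde X$ for the latter). Substituting these into the six descriptions from Theorem \ref{4.2 Theorem} gives precisely (a), (b), (d), (e), (f); and (c) is then forced by the fact that the six strata partition $\mathbb P^{r+1}_K \setminus \tilde X$, just as in the smooth case.

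The main technical obstacle is to make sure that the \mbox{Join}-with-vertex operation is compatible with set-theoretic differences, i.e.\ that for closed $S_2 \subseteq S_1 \subseteq \langle \tilde X_0 \rangle$,
\[
\bigl(\mbox{Join}(\mbox{Vert}(\tilde X), S_1) \setminus \mbox{Join}(\mbox{Vert}(\tilde X), S_2)\bigr) \setminus \mbox{Vert}(\tilde X) \ =\  \psi^{-1}(S_1 \setminus S_2).
\]
This is the point where one must use that each fibre of $\psi$ lies entirely above a single point of $\langle \tilde X_0 \rangle$, so membership in $\mbox{Join}(\mbox{Vert}(\tilde X), S_i)$ is decided fibrewise by $\bar p \in S_i$. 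Once this compatibility is recorded, the rest of the proof is a direct substitution and requires no further geometry beyond what Theorem \ref{4.2 Theorem} already supplies.
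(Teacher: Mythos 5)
Your proposal is correct and follows essentially the same route as the paper: the authors likewise prove the theorem by recording the identity $\psi^{-1}(T)=\mbox{Join}(\mbox{Vert}(\tilde X),T)\setminus \mbox{Vert}(\tilde X)$ for closed $T\subseteq\langle\tilde X_0\rangle$, applying it to $\tilde X_0$, $\langle S(\underline 1)\rangle$, $\mbox{Join}(S(\underline 1),\tilde X_0)$, $\bigtriangleup$, $\mbox{Tan}(\tilde X_0)$ and $\mbox{Sec}(\tilde X_0)$, and then combining Theorem \ref{4.2 Theorem} with Lemma \ref{5.2 Lemma}. Your additional remarks on the compatibility of the join-with-vertex operation with set-theoretic differences and on the degenerate case $\mbox{Vert}(\tilde X)=\emptyset$ simply make explicit what the paper leaves implicit.
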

\vspace{0.1 cm}

\begin{proof}
For any closed subset $T \subseteq \langle \tilde X_0\rangle $, we have $\psi ^{-1}(T) = \mbox{\rm Join}(\mbox{\rm Vert} (\widetilde{X}), T) \backslash
Z$. Therefore we get the relations
\begin{align*} &\psi ^{-1}(\tilde X_0) & = \quad & \tilde X \backslash \mbox{\rm Vert} (\widetilde{X}) ;\\
			   &\psi ^{-1}(\langle S(\underline 1) \rangle ) &=\quad& A \backslash \mbox{\rm Vert} (\widetilde{X}); \\
			   &\psi ^{-1}(\mbox{Join} (S(\underline 1), \tilde X_0)) &=\quad& B \backslash \mbox{\rm Vert} (\widetilde{X});\\
			   &\psi ^{-1} (\mbox{Join}(\langle S(\underline 1), \bigtriangleup )) &=\quad& U \backslash \mbox{\rm Vert} (\widetilde{X}); \\
			   &\psi ^{-1}(\mbox{Tan}(\tilde X_0)) &=\quad& V \backslash \mbox{\rm Vert} (\widetilde{X}) \quad \mbox{and} \\
               &\psi ^{-1}(\mbox{Sec}(\tilde X_0)) &=\quad& W \backslash \mbox{\rm Vert} (\widetilde{X}).
\end{align*}
Thus we get our claim by combining Theorem \ref{4.2 Theorem} and Lemma~\ref{5.2 Lemma}.
\end{proof}
\vspace{0.1 cm}

\begin{remark}\label{5.4 remark}
Let $Z = \mbox{\rm Vert} (\widetilde{X})$ and let $p \in {\mathbb P}^{r + 1}_K \backslash \tilde X$. By Notation and Remarks \ref{2.3 Notation and Remarks} and Theorem \ref{5.3 Theorem}, the secant loci $\Sigma _p(\tilde X), \Sigma _{\overline p}(\tilde X_0)$ and the arithmetic depth of the projection $X_p \subset \mathbb P^r _K$
depend on the position of $p$ as shown by the following table
\bigskip
\[
\renewcommand{\arraystretch}{1.25}
\begin{tabular}{| c | c | c | c | c |}\hline
$p\in  $&$\Sigma _{\overline p}(\tilde X_0) \subseteq \tilde X_0$
   & $\Sigma _p(\tilde X) \subseteq \tilde X$  & $\dim (\Sigma _p(\tilde X))$
   & $\mbox{\rm depth}(X_p)$ \\ \hline
${\mathbb P}^{r + 1}_K \backslash W$  & $\emptyset $ & $2Z \subseteq \langle Z,
   p \rangle $  &  $h$  & $ h + 2$ \\  \hline
$W \backslash V$ & $\{ q_1, q_2\} $ & $\langle Z, q_1 \rangle \cup \langle
   Z, q_2 \rangle $ & $h + 1$ & $h + 3$ \\
& $q_1, q_2 \in X_0, q_1 \not= q_2$ &&& \\ \hline
$V \backslash (B \cup U)$ & $2q \in \langle \overline p, q \rangle $ & $2
   \langle Z, q \rangle \subseteq \langle Z, p, q \rangle $ & $ h + 1$ & $h
   + 3$ \\
& $q \in \tilde X_0$ &&& \\ \hline
$B \backslash (A \cup \tilde X)$ & $L_1 \cup L_2 \subseteq \tilde X_0 $ &
   $\langle Z, L_1 \rangle \cup \langle Z, L_2 \rangle $ & $ h + 2$ & $h
   + 4$ \\
& $\mbox{\rm two coplanar lines}$ &&& \\ \hline
$U \backslash B$ & $C \subseteq \tilde X_0 \mbox{ \rm a }$ &$\mbox{\rm Join}
   (Z,C)$ & $h + 2$ & $h + 4$ \\
& $\mbox{\rm smooth plane conic}$ &&& \\ \hline
$A \backslash \tilde X$ &$Q \subseteq \tilde X_0, \mbox{ \rm a smooth }$
   & $\mbox{\rm Join}(Z,Q)$ & $h + 3$ & $h + 5 $ \\
& $\mbox{\rm quadric surface}$ &&& \\ \hline
\end{tabular}
\]
\centerline{\rm Table 1: The Secant Stratification}
\end{remark}
\vspace{0.3 cm}

\section{Non-Normal Del Pezzo Varieties}\label{6. Non-Normal Del Pezzo Varieties}

\begin{remark} \label{6.1 Remark} (A) A non-degenerate closed integral subscheme $X \subseteq {\mathbb P}^r_K$ is called a {\it maximal Del Pezzo variety} if it is arithmetically Cohen-Macaulay and satisfies $\deg (X) = \codim (X) + 2$ (cf. \cite[Definition 6.3]{B-Sche}).
A {\it Del Pezzo variety} is a projective variety $X \subseteq {\mathbb P}^r_K$ which is an isomorphic
projection of a maximal Del Pezzo variety. It is equivalent to say that the polarized pair $(X, {\mathcal O}_X(1))$ is Del Pezzo
in the sense of Fujita \cite{Fu}, (cf. \cite[Theorem 6.8]{B-Sche}). So in particular we can say:
\vspace{0.1 cm}

\renewcommand{\descriptionlabel}[1]%
             {\hspace{\labelsep}\textrm{#1}}
\begin{description}
\setlength{\labelwidth}{13mm}
\setlength{\labelsep}{1.5mm}
\setlength{\itemindent}{0mm}

\item[(6.1)] {\it A Del Pezzo variety $X \subseteq {\mathbb P}^r_K$ is
maximally Del Pezzo if and only if it is linearly normal}.
\end{description}
\vspace{0.1 cm}

\noindent (B) Let $X \subseteq {\mathbb P}^r_K$ be a non-degenerate closed integral subscheme. According to \cite[Theorem 6.9]{B-Sche} and Notation and Remark \ref{2.3 Notation and Remarks},
\vspace{0.1 cm}

\renewcommand{\descriptionlabel}[1]%
             {\hspace{\labelsep}\textrm{#1}}
\begin{description}
\setlength{\labelwidth}{13mm}
\setlength{\labelsep}{1.5mm}
\setlength{\itemindent}{0mm}

\item[(6.2)] {\it $X \subseteq {\mathbb P}^r_K$ is a non-normal maximal Del Pezzo variety if and only if $X = \pi_p (\widetilde{X})$ where $\tilde X \subseteq {\mathbb P}^{r + 1}_K$ is of minimal degree with $\codim (\widetilde{X}) \geq 2$, $p$ is a closed point in ${\mathbb P}^{r + 1}_K \backslash \tilde X$ with $\dim~ \Sigma_p (\widetilde{X}) = \dim~(\widetilde{X}) -1$. }
\end{description}
\vspace{0.1 cm}

\noindent Obviously $\widetilde{X}$ is either (a cone over) the Veronese surface in $\mathbb P^5 _K$ or a rational normal scroll (cf. Notation and Remarks \ref{2.1 Notation and Remarks}). We say that the Del Pezzo variety $X$ is {\it exceptional} (resp. {\it non-exceptional}) if $\tilde X$ is (a cone over) the Veronese surface (resp. a rational normal scroll). \qed
\end{remark}
\vspace{0.1 cm}

In view of (6.2) it suffices to classify the pairs $(\widetilde{X}, p)$ where $X_p = \pi_p (\tilde X) \subseteq {\mathbb P}^r_K$ is arithmetically Cohen-Macaulay in order to classify the
non-normal maximal Del Pezzo varieties in ${\mathbb P}^r_K$. This we do now for the case of non-exceptional non-normal maximal Del Pezzo varieties. For the exceptional case, see Remark \ref{6.3 Remark}.
\vspace{0.1 cm}

\begin{theorem}\label{6.2 Theorem}
Let $\tilde X \subseteq {\mathbb P}^{r + 1}_K$ be a rational normal scroll with $\codim (\widetilde{X}) \geq 2$ and with vertex $Z := \mbox{\rm Vert} (\widetilde{X})=\mathbb P^h _K$ for some $h \geq -1$. Let $\tilde X_0 \subset \mathbb P^{r-h} _K$ be a smooth rational normal scroll such that $\tilde X = \mbox{\rm Join} (Z, \tilde X_0 )$. Then for a closed point $p$ in ${\mathbb P}^{r + 1}_K \backslash \tilde X$, the variety
\begin{equation*}
X_p := \pi_p (\widetilde{X}) \subseteq {\mathbb P}^r_K
\end{equation*}
is arithmetically Cohen-Macaulay, and hence non-normal maximally Del Pezzo precisely in the following cases:
\vspace{0.1 cm}

\renewcommand{\descriptionlabel}[1]%
             {\hspace{\labelsep}\textrm{#1}}
\begin{description}
\setlength{\labelwidth}{13mm}
\setlength{\labelsep}{1.5mm}
\setlength{\itemindent}{0mm}

\item[{\rm (a)}] $\tilde X_0 = S(a)$ for some integer $a > 2$ and
$p \in \mbox{\rm Join}(Z, \mbox{\rm Sec}(\tilde X_0))
\backslash \tilde X$.

\item[{\rm (b)}] {\rm (i)} $\tilde X_0 = S(1,2)$ and $p \in {\mathbb P}
^{r + 1}_K \backslash \tilde X$;

\item[  ] {\rm (ii)} $\tilde X_0 = S(1,b)$ with $b > 2$ and
$p \in \mbox{\rm Join}(Z, \mbox{\rm Join}(S(1), \tilde X_0))
\backslash \tilde X$;

\item[  ] {\rm (iii)} $\tilde X_0 = S(2,2)$ and $p \in \mbox{\rm Join}
(Z,\bigtriangleup) \backslash
\tilde X$;

\item[  ] {\rm (iv)} $\tilde X_0 = S(2,b)$ for some integer $b > 2$ and
$p \in \mbox{\rm Join}(Z, \langle S(2) \rangle ) \backslash
\tilde X$.

\item[{\rm (c)}] {\rm (i)} $\tilde X_0 = S(1,1,1)$ and $p \in
{\mathbb P}^{r + 1}_K \backslash \tilde X$;

\item[  ] {\rm (ii)} $\tilde X_0 = S(1,1,c)$ for some integer $c > 1$
and $p \in \mbox{\rm Join}(Z, \langle S(1,1) \rangle )
\backslash \tilde X$.
\end{description}
\end{theorem}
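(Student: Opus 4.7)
The approach is to combine Remark~\ref{6.1 Remark}(B)---according to which $X_p$ is a non-normal maximal Del Pezzo variety exactly when it is arithmetically Cohen--Macaulay---with the depth table of Remark~\ref{5.4 remark}. Since $X_p$ is aCM iff $\depth(X_p) = \dim(X_p)+1$, and since $\codim(\tilde X) \geq 2$ makes $\pi_p$ birational by Notation and Remarks~\ref{2.3 Notation and Remarks}(A) (so that $\dim X_p = \dim \tilde X = n+h+1$), the aCM condition reduces to the single equation $\depth(X_p) = n+h+2$.

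Table~1 shows that the depth only attains the values $h+2, h+3, h+4, h+5$. Solving $n+h+2 = \depth(X_p)$ thus forces $n \in \{1,2,3\}$: non-degeneracy of $\tilde X_0$ gives $n \geq 1$, while $n \geq 4$ would require depth at least $h+6$, exceeding the maximum $h+5$. For each admissible $n$ one reads off the stratum of $p$ realizing depth $n+h+2$ and unravels its description in terms of the type $(a_1, \dots, a_n)$ of $\tilde X_0$ via the integers $k, m$ of Definition and Remark~\ref{4.1 Definition and Remark}(A).

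For $n = 1$ one must have $\tilde X_0 = S(a)$ with $a \geq 3$; since $S(\underline 1), S(\underline 2), \bigtriangleup$ are all empty, the sets $A, U$ collapse to $Z$ and $B = \tilde X$, so the two depth-$(h+3)$ strata $W \setminus V$ and $V \setminus (B \cup U)$ amalgamate into $W \setminus \tilde X = \mbox{Join}(Z, \mbox{Sec}(\tilde X_0)) \setminus \tilde X$, which is case~(a). For $n = 2$ the aCM locus is $(B \setminus (A \cup \tilde X)) \cup (U \setminus B)$; running through the types $(1,2)$, $(1,b)$ with $b > 2$, $(2,2)$, $(2,b)$ with $b > 2$ and computing $A, B, U$ via $k, m$ in each case yields (b)(i)--(b)(iv), while the remaining possibility $a_1 \geq 3$ collapses $A, B, U$ inside $\tilde X$ and produces an empty locus. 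For $n = 3$ only $A \setminus \tilde X$ has depth $h+5 = n+h+2$; its non-emptiness is equivalent to $\langle S(\underline 1) \rangle \not\subseteq \tilde X_0$, i.e.\ $k \geq 2$, and the possibilities $k = 3$ (type $(1,1,1)$) and $k = 2$ (type $(1,1,c)$ with $c \geq 2$) yield (c)(i) and (c)(ii) respectively.

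The principal obstacle is the bookkeeping in the degenerate situations: for small $k$ or $m-k$ some of the subvarieties $S(\underline 1), S(\underline 2), \bigtriangleup$ are empty and the corresponding joins $A, B, U$ either degenerate or lie inside $\tilde X$, so one has to interpret joins with empty sets consistently in order that strata do not spuriously collapse or merge. Once these identifications are pinned down, the theorem is a direct readout of Table~1, and the converse direction---that $p$ in the listed loci really does give an aCM projection---follows immediately from the same table.
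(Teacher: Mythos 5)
Your proposal is correct and follows essentially the same route as the paper: reduce via Remark~\ref{6.1 Remark}(B) and Table~1 of Remark~\ref{5.4 remark} to the condition $\dim \Sigma_p(\tilde X)=n+h$, force $n\in\{1,2,3\}$, and then identify the relevant stratum in each case by computing $A$, $B$, $U$, $\bigtriangleup$ from the type of $\tilde X_0$, with careful handling of the degenerate (empty) cases. The only cosmetic difference is that you phrase the aCM criterion as $\depth(X_p)=\dim(X_p)+1$ rather than invoking (6.2) directly, which is equivalent.
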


\begin{proof}
Let $n+h+1$ denote the dimension of $\tilde X$. Thus $\tilde X_0$ has dimension $n$. According to (6.2), $X_p$ is arithmetically Cohen-Macaulay if and only if $\Sigma_p (\widetilde{X})$ has dimension equal to $n+h$. Since $\dim~\Sigma_p (\widetilde{X}) = h+j ~(0 \leq j \leq 3)$ (cf. Theorem \ref{3.3 theorem} and Remark \ref{5.4 remark}), it follows that $X_p$ is arithmetically Cohen-Macaulay if and only if $n=j$ with $j \in \{ 0,1, 2, 3 \} $. Obviously the case $j = 0$ cannot occur. Therefore $n \in \{ 1 , 2, 3 \}$.
\vspace{0.1 cm}

``(a)'': Assume that $n = 1$, so that $\tilde X_0 = S(a)$ for some integer $a \geq 3$. Then Remark \ref{5.4 remark} yields that
$\mbox{depth}(X_p) = h + 3$ if and only if
   \[ p \in (W \backslash V)
      \cup (V \backslash B \cup U) = W \backslash B \cup U.
   \]
Since $B = \tilde X$, $U = Z$ and $W = \mbox{Join}(Z, \mbox{Sec}(\tilde X_0))$, it follows that $X_p$ is arithmetically Cohen-Macaulay if and only if $p \in
\mbox{Join}(Z, \mbox{Sec}(\tilde X_0)) \backslash \tilde X$.
\vspace{0.1 cm}

``(b)'': Assume that $n = 2$, so that $\tilde X_0 = S(a,b)$ for some integers $a, b$ with $1 \leq a
\leq b$ and $b \geq 2$. Then Remark \ref{5.4 remark} yields that
$\mbox{depth}(X_p) = h + 4$ if and only if
   \[ p \in (B \backslash
      A \cup \tilde X) \cup (U \backslash B) = (B \cup U) \backslash (A \cup
      \tilde X) .
	  \leqno{(6.3)}
   \]
If $a \geq 3$, then we have $\bigtriangleup =S (\underline 1) = \emptyset $ and hence $B=\tilde X$ and $U = Z \subseteq \tilde X$, which leaves no possibility for $p$. Therefore $a \leq 2$.

Observe that for all $b \geq 2$ we have
   \[ A = \begin{cases} \mbox{Join}(Z, \langle S(1) \rangle ) ,
      &\mbox{if } a = 1 ; \\ Z , &\mbox{if } a = 2 . \end{cases}
	  \leqno{(6.4)}
   \]
As $\langle S(1) \rangle = S(1)$ it follows $A \subseteq \tilde X$ for all
$b \geq 2$. So, the condition (6.3) imposed on $p$ now simply becomes
   \[ p \in (B \cup U) \backslash \tilde X .
      \leqno{(6.5)}
   \]
Also we get for all $b \geq 2$
   \[ B = \begin{cases} \mbox{Join}(Z,\mbox{Join}(S(1), \tilde X_0)) ,
      &\mbox{if } a = 1 ; \\ \tilde X, &\mbox{if } a = 2 . \end{cases}
	  \leqno{(6.6)}
   \]
Observe that $\bigtriangleup$ is given by
   \[ \bigtriangleup = \begin{cases} \langle S(2) \rangle  ,
      &\mbox{if } a = 1 \mbox{ and } b = 2 ; \\
	  \emptyset , &\mbox{if } a = 1 \mbox{ and } b > 2 ; \\
	  {\mathbb P}^2_K \times {\mathbb P}^1_K , &\mbox{if }
	  a = b = 2 ; \\
	  \langle S(2) \rangle  , &\mbox{if } a = 2 < b .
	  \end{cases}
   \]
So, we get the following possibilities for $U = \mbox{Join}(Z,
\mbox{Join}(\langle S(\underline 1) \rangle , \bigtriangleup )$
    \[ U = \begin{cases} {\mathbb P}^{r + 1}_K ,
       &\mbox{if } a = 1 \mbox{ and } b = 2 ; \\
	   \mbox{Join}(Z, S(1)) \subseteq \tilde X ,
	   &\mbox{if } a = 1 \mbox{ and } b > 2 ; \\
	   \mbox{Join}(Z, {\mathbb P}^2_K \times {\mathbb P}^1_K) ,
	   &\mbox{if } a = b = 2 ; \\
	   \mbox{Join}(Z, \langle S(2) \rangle ) , &\mbox{if } a = 2 < b .
	   \end{cases}
	   \leqno{(6.7)}
    \]
Combining (6.6) and (6.7), we now get for $T:= (B \cup U) \backslash
\tilde X$ the following values
   \[ T = \begin{cases} {\mathbb P}^{r + 1}_K \backslash \tilde X ,
       &\mbox{if } a = 1 \mbox{ and } b = 2 ; \\
	   \mbox{Join}(Z, \mbox{Join}(S(1), \tilde X_0)) \backslash \tilde X
	   &\mbox{if } a = 1 \mbox{ and } b > 2 ; \\
	   \mbox{Join}(Z, {\mathbb P}^2_K \times {\mathbb P}^1_K) \backslash
	   \tilde X , &\mbox{if } a = b = 2 ; \\
	   \mbox{Join}(Z, \langle S(2) \rangle ) \backslash \tilde X ,
	   &\mbox{if } a = 2 < b .
	   \end{cases}
   \]
This proves statement (b).
\vspace{0.1 cm}

``(c)'': Assume that $n = 3$, so that $\tilde X_0 = S(a,b,c)$ for some integers $a, b, c$ with $1 \leq
a \leq b \leq c$. According to Remark \ref{5.4 remark} we have $\mbox{depth}(X_p) = h + 5$
if and only if $p \in A \backslash \tilde X$. If $a > 1$, we have $A = Z$, so that no choice for $p$ is left. Therefore
$a = 1$. Now, for $A$ we get the following possibilities:
    \[ A = \begin{cases} \langle Z, \langle \tilde X_0 \rangle \rangle =
	   {\mathbb P}^{r + 1}_K , &\mbox{if } a = b = c = 1 ; \\
	   \mbox{Join}(Z, \langle S(1,1) \rangle ) ,
	   &\mbox{if } a = b = 1 < c ; \\
	   \mbox{Join}(Z, \langle S(1) \rangle ) , &\mbox{if } a = 1 < b .
	   \end{cases}
    \]
If $a = 1 < b$, we have $A = \mbox{Join}(Z, S(1)) \subseteq \tilde X$,
so that no possibility is left for $p$. Therefore $b = 1$. This proves claim (c).
\end{proof}
\vspace{0.1 cm}

\begin{remark}\label{6.3 Remark}
In order to understand all non-normal Del Pezzo varieties it suffices now to know the exceptional cases in which $X = X_p \subseteq {\mathbb P}^r_K$ where $\tilde X \subseteq {\mathbb P}^{r + 1}_K$ is a cone over the Veronese surface $S \subset {\mathbb P}^5 _K$. \\
(A) Recall that $\mbox{\rm Sec} (S)$ is a cubic hypersurface. Let $p$ be a closed point in $\mathbb P^5 _K \setminus S$. It belongs to folklore that the following statements are equivalent:
\begin{enumerate}
\item[$(a)$] $p \in \mbox{\rm Sec} (S) \setminus S$.
\item[$(b)$] $\Sigma_p (S)$ is a smooth plane conic curve.
\item[$(c)$] $\pi_p (S) \subset \mathbb P^4 _K$ is a complete intersection of two quadrics and so it is arithmetically Cohen-Macaulay.
\end{enumerate}
Therefore the secant stratification of $S \subset {\mathbb P}^5 _K$ is
\vspace{0.1 cm}

\renewcommand{\descriptionlabel}[1]%
             {\hspace{\labelsep}\textrm{#1}}
\begin{description}
\setlength{\labelwidth}{13mm}
\setlength{\labelsep}{1.5mm}
\setlength{\itemindent}{0mm}

\item[(6.8)] ${\mathbb P}^5 _K = S \dot{\cup} SL^{C} (S) \dot{\cup} SL^{\emptyset} (S)$
\end{description}
\vspace{0.1 cm}

\noindent where $SL^C(S)$ is equal to $\mbox{\rm Sec}(S) \setminus S$.

\noindent (B) In the same way as in the proof of Theorem \ref{3.3 theorem} and Theorem \ref{5.3 Theorem}, one can get the secant stratification of
$\tilde X \subset \mathbb P^{r+1} _K$ from (6.8). In particular, $X_p \subset {\mathbb P}^r _K$ is a maximal Del Pezzo variety if and
only if $p \in \mbox{Join}(\mbox{Vert}(\tilde X),\mbox{\rm Sec}(S)) \backslash \tilde X$. \qed
\end{remark}
\vspace{0.1 cm}

\begin{remark}
(A) Let $\tilde X \subseteq {\mathbb P}^{r + 1}_K$ be a variety of minimal degree with $\codim (\tilde X) \geq 2$ and let $p$ be a closed point in $\mathbb P^{r+1} _K \setminus \tilde X$. As a consequence of Theorem \ref{6.2 Theorem} and Remark \ref{6.3 Remark}, we finally have a complete list of pairs $(\tilde X , p )$ for which $X_p$ is a non-normal Del Pezzo variety. This provides a complete picture of non-normal Del Pezzo varieties from the point of view of linear projections and normalizations.

\noindent (B) Let the notations and hypotheses be as in Theorem~\ref{6.2 Theorem} and Remark \ref{6.3 Remark}. Since $X_p = \pi_p (\tilde X)$ is a cone with vertex $\pi_p(\mbox{\rm Vert}( \tilde X))$, the non-normal Del Pezzo varieties which are not cones are the varieties described in Theorem~\ref{6.2 Theorem} and Remark \ref{6.3 Remark} for which $\mbox{\rm Vert}( \tilde X) \not= \emptyset $. More precisely, the non-normal maximal Del Pezzo varieties which are not cones are precisely the following ones:
\vspace{0.1 cm}

\renewcommand{\descriptionlabel}[1]%
             {\hspace{\labelsep}\textrm{#1}}
\begin{description}
\setlength{\labelwidth}{13mm}
\setlength{\labelsep}{1.5mm}
\setlength{\itemindent}{0mm}

\item[{\rm (i)}] Projections of a rational normal curve $S(a) \subseteq
{\mathbb P}^a_K$ with $a > 2$ from a point $p \in \mbox{\rm Sec}(S(a))
\backslash S(a)$.

\item[{\rm (ii)}] Projections of the Veronese surface $S \subseteq {\mathbb P}^5_K$ from a point $p \in \mbox{\rm Sec} (S) \backslash S$.

\item[{\rm (iii)}] Projections of a smooth cubic surface scroll $S(1,
2) \subseteq {\mathbb P}^4_K$ from a point $p \in {\mathbb P}^4_K
\backslash S(1,2)$.

\item[{\rm (iv)}] Projections of a smooth rational normal scroll
$S(1,b) \subseteq {\mathbb P}^{b + 2}_K$ with $b > 2$ from a point
$p \in \mbox{\rm Join}(S(1), S(1,b)) \backslash S(1,b)$.

\item[{\rm (v)}] Projections of a smooth quartic surface scroll
$S(2,2) \subseteq {\mathbb P}^5_K$ from a point $p \in {\mathbb P}^2_K
\times {\mathbb P}^1_K \backslash S(2,2)$.

\item[{\rm (vi)}] Projections of a smooth surface scroll $S(2,b) \subseteq
{\mathbb P}^{b + 3}_K$ with $b > 2$ from a point $p \in \langle S(2)
\rangle \backslash S(2,b)$.

\item[{\rm (vii)}] Projections of a smooth $3$-fold scroll $S(1,1,1)
\subseteq {\mathbb P}^5_K$ from a point $p \in {\mathbb P}^5_K
\backslash S(1,1,1)$.

\item[{\rm (viii)}] Projections of a smooth $3$-fold scroll $S(1,1,c)
\subseteq {\mathbb P}^{c + 4}_K$ with $c > 1$ from a point $p \in
\langle S(1,1) \rangle \backslash S(1,1,c)$.
\end{description}
Therefore if $X \subset \mathbb P^r_K$ is a non-normal Del Pezzo and is not a cone, then the dimension of $X$ is $\leq 3$ while there is no upper bound of the degree of $X$. This fact was first shown by T. Fujita (cf. $(2.9)$ in \cite{Fu2} and $(9.10)$ in \cite{Fu}). \qed
\end{remark}
\vspace{0.1 cm}

\begin{remark}\label{6.6 Remark}
Using the same method as above, one can indeed classify all varieties $X \subset {\mathbb P}^r _K$ of almost minimal degree and codimension $\geq 2$, via their arithmetic depth, as projections from rational normal scrolls of given numerical type and the position of the center of the projection. We shall give a detailed exposition of this in a later paper. \qed
\end{remark}
\vspace{0.3 cm}

\bibliographystyle{plain}

\vskip 1 cm

\author{
 \begin{tabular}{llll}
         Institut f\"ur Mathematik        &&Department of Mathematics \\
         Universit\"at Z\"urich           &&Korea University \\
         Winterthurerstrasse 190          &&Seoul 136-701 \\
         CH-8057 Z\"urich  && Republic of Korea \\
		 Schwitzerland & \\
        {\it email: } brodmann@math.uzh.ch
        &&{\it email: } euisungpark@korea.ac.kr
 \end{tabular}
}

\end{document}